\newcommand{\GD}{\operatorname{GD}}
\newcommand{\gGD}{\operatorname{gGD}}
\newcommand{\gLO}{\operatorname{gLO}}
\newcommand{\Max}{\operatorname{Max}}
\newcommand{\Spec}{\operatorname{Spec}}
\newtheorem{theorem}{Theorem}[section]
\newtheorem{lemma}[theorem]{Lemma}
\newtheorem{corollary}[theorem]{Corollary}
\newtheorem{proposition}[theorem]{Proposition}
\newtheorem{definition}[theorem]{Definition}
\newtheorem{example}[theorem]{Example}
\newtheorem{remark}[theorem]{Remark}
\begin{document}

\title[On graded Going down domains]
{On graded Going down domains}
\author[P. Sahandi and N. Shirmohammadi]
{Parviz Sahandi and Nematollah Shirmohammadi}

\address{(Sahandi) Department of Pure Mathematics, Faculty of Mathematics, Statistics and Computer Science,  University of Tabriz, Tabriz, Iran.} \email{sahandi@ipm.ir}
\address{(Shirmohammadi) Department of Pure Mathematics, Faculty of Mathematics, Statistics and Computer Science,  University of Tabriz, Tabriz, Iran.} \email{shirmohammadi@tabrizu.ac.ir}


\thanks{2010 Mathematics Subject Classification: 13A02, 13A15, 13F05}
\thanks{Key Words and Phrases: Graded integral domain, going-down domain, graded going-down domain, gr-valuation domain, divided domain, treed domain}

\begin{abstract}
Let  $\Gamma$ be a torsionless commutative cancellative monoid and $R =\bigoplus_{\alpha \in \Gamma}R_{\alpha}$ be a $\Gamma$-graded
integral domain. In this paper, we introduce the notion of graded going-down domains. Among other things, we provide an equivalent condition for graded-Pr\"{u}fer domains in terms of graded going-down and graded finite-conductor domains. We also characterize graded going-down domains by means of graded divided domains. As an application, we show that the graded going-down property is stable under factor domains.
\end{abstract}

\maketitle

\section{Introduction}

All rings considered in this paper are commutative rings with identity. An overring of a ring $R$ is a subring of the total quotient ring of $R$ containing $R$. In literature, an extension of commutative integral domains $R\subseteq S$ is said to satisfy the \emph{going-down property} ($\GD$ for short) provided that given primes $P_0\subseteq P$ in $R$ and $Q$ in $S$ with $Q\cap R=P$, there exists a prime ideal $Q_0$ of $S$ such that $Q_0\subseteq Q$ and $Q_0\cap R=P_0$ \cite[Page 28]{k74}. In \cite{d73, d74, d76}, Dobbs defined a commutative integral domain $R$ to be a \emph{going-down domain ($\GD$ domain)} in case $R\subseteq S$ satisfies going-down property for each domain $S$ containing $R$. In \cite[Theorem 1]{dp76}, Dobbs and Papick showed that one may restrict the test domains $S$ either to be valuation overrings of $R$ or to be simple overrings of $R$. Using this concept, he obtained a nice characterization of Pr\"{u}fer domains. In fact, he proved that $R$ is a Pr\"{u}fer domain if and only if $R$ is an integrally closed finite-conductor $\GD$ domain \cite[Corollary 4]{d73}. It is also shown that a quasi-local domain $R$ is a $\GD$ domain if and only if $R$ has a divided integral overring \cite[Theorem 2.5]{d76}.

The purpose of this paper is to study the concept of $\GD$ domain in case of a graded integral domain $R=\bigoplus_{\alpha\in\Gamma}R_{\alpha}$ graded by an arbitrary grading torsionless monoid $\Gamma$. In Section 2, we define the graded going-down property ($\gGD$ for short) for extensions of graded domains and show that the $\gGD$ property holds for an extension of graded domains whenever the contraction map of homogeneous prime spectra is open. In Section 3, we first give our definition of graded going-down domains ($\gGD$ domains). Then it is shown that a $\gGD$ domain has a treed homogeneous prime spectrum. This leads us to obtain a new characterization of graded-Pr\"{u}fer domains. We next show that in the definition of $\gGD$ domains we can only consider the test overrings either to be gr-valuation overrings or to be homogeneous simple overrings. We finally provide a characterization of $\gGD$ domains in the gr-Noetherian case. In Section 4, we show that the $\gGD$ property is stable under the formation of rings of fractions and factor domains. In order to achieve this, we need to find an equivalent condition for $\gGD$ domains by means of graded divided overrings.

\subsection{Graded integral domains}

Let $\Gamma$ be a (nonzero) torsionless commutative cancellative monoid (written
additively) and $\langle \Gamma \rangle = \{a - b \mid a,b \in \Gamma\}$ be the
quotient group of $\Gamma$; so $\langle \Gamma \rangle$ is a torsionfree abelian group.
It is well known that a cancellative monoid $\Gamma$ is torsionless
if and only if $\Gamma$ can be given a total order compatible with the monoid
operation \cite[page 123]{no68}. By a $(\Gamma$-)graded integral domain  $R =\bigoplus_{\alpha \in \Gamma}R_{\alpha}$,
we mean an integral domain graded by $\Gamma$.
That is, each nonzero $x \in R_{\alpha}$ has degree $\alpha$, i.e., deg$(x) = \alpha$,  and
deg$(0) = 0$. Thus, each nonzero $f \in R$ can be written uniquely as $f = x_{\alpha_1} + \dots + x_{\alpha_n}$ with
deg$(x_{\alpha_i}) = \alpha_i$ and $\alpha_1 < \cdots < \alpha_n$.
A nonzero $x \in R_{\alpha}$ for every $\alpha \in \Gamma$ is said
to be {\em homogeneous}.

The $\Gamma$-graded domain $R=\bigoplus_{\alpha\in\Gamma}R_{\alpha}$ is called a \emph{graded subring} of a $\Lambda$-graded domain $T =\bigoplus_{\alpha \in \Lambda}T_{\alpha}$, if $\Gamma$ is a subsemigroup of $\Lambda$ and for every $\alpha\in \Gamma$, $R_{\alpha}\subseteq T_{\alpha}$. It is easy to see that we can consider $R$ as a $\Lambda$-graded ring by setting $R_{\alpha}=0$ for each $\alpha\in\Lambda\setminus\Gamma$, and in this case $R=\bigoplus_{\alpha \in \Lambda}(T_{\alpha}\cap R)$. By ``$R\subseteq T$ is an extension of graded domains'' we mean that $R$ is a graded subring of $T$.

Let $R=\bigoplus_{\alpha\in\Gamma}R_{\alpha}$ be a $\Gamma$-graded domain and set  $H = \bigcup_{\alpha \in \Gamma}(R_{\alpha} \setminus \{0\})$; so
$H$ is the saturated multiplicative set of nonzero homogeneous elements of $R$.
Then $R_H$, called the {\em homogeneous quotient field} of $R$,
 is a $\langle \Gamma \rangle$-graded integral domain whose nonzero homogeneous elements are units.
We say that an overring $T$ of $R$ is a {\em homogeneous overring} of $R$ if
$T = \bigoplus_{\alpha \in \langle \Gamma \rangle}(T \cap (R_H)_{\alpha})$;
so $T$ is a $\langle \Gamma \rangle$-graded integral domain
such that $R \subseteq T \subseteq R_H$.
The ring of fractions $R_S$ is a homogeneous overring of $R$ for
a multiplicative set $S$ of nonzero homogeneous elements of $R$
(with deg$(\frac{a}{b}) =$ deg$(a) -$ deg$(b)$ for $a \in H$ and $b \in S$).

For a fractional ideal $A$ of $R =\bigoplus_{\alpha \in \Gamma}R_{\alpha}$ with $A \subseteq R_H$,
let $A^*$ be the fractional ideal of $R$ generated by homogeneous elements in $A$; so
$A^* \subseteq A$. The fractional ideal $A$ is said to be {\em homogeneous} if $A^* = A$. A homogeneous ideal $P$ of $R$ is prime if and only if for all $a, b\in H$, $ab\in P$ implies $a\in P$ or $b\in P$ \cite[Page 124]{no68}. The set of all homogeneous prime ideals of $R$ is denoted by $\Spec_h(R)$. We note that a minimal prime ideal $P$ of a homogeneous ideal $I$ of $R$ is homogeneous. Indeed, the inclusions $I\subseteq P^*\subseteq P$ shows that $P^*=P$.

A homogeneous ideal of $R$ is called a {\em maximal homogeneous ideal}
if it is maximal among proper homogeneous ideals of $R$.
It is easy to see that each proper homogeneous ideal of $R$
is contained in a maximal homogeneous ideal of $R$. The set of all maximal homogeneous ideals of $R$ is denoted by $\Max_h(R)$.

Throughout this paper, let $\Gamma$ be a nonzero torsionless commutative cancellative monoid, $R=\bigoplus_{\alpha\in\Gamma}R_{\alpha}$ be an integral domain graded by $\Gamma$ and $H$ be the set of nonzero homogeneous elements of $R$.

\section{Graded going down property}

In this section we define the graded theoretic version of going-down property and then identify some sufficient conditions for this property.

\begin{definition} Let $R\subseteq T$ be an extension of graded domains. We say that $R\subseteq T$ satisfies graded going-down (abbreviated, $\gGD$) if, whenever $P_0\subseteq P$ are homogeneous prime ideals of $R$ and $Q$ is a homogeneous prime ideal of $T$ such that $Q\cap R=P$, there exists a homogeneous prime ideal $Q_0$ of $T$ such that $Q_0\subseteq Q$ and $Q_0\cap R=P_0$.
\end{definition}

Assume that the extension of graded domains $R\subseteq T$ satisfies $\GD$. We show that $R\subseteq T$ satisfies $\gGD$. Indeed, let $P_0\subseteq P$ be homogeneous prime ideals of $R$ and $Q$ be a homogeneous prime ideal of $T$ such that $Q\cap R=P$. Since $R\subseteq T$ satisfies $\GD$, there exists a prime ideal $Q_0$ of $T$ such that $Q_0\subseteq Q$ and $Q_0\cap R=P_0$. Now using \cite[Lemma 2.7]{hs18} we have $P_0=P_0^*=(Q_0\cap R)^*=Q_0^*\cap R$. Thus $R\subseteq T$ satisfies $\gGD$. By Example \ref{e}, one can find an extension of graded domains satisfying $\gGD$ but not $\GD$.

Let $R\subseteq T$ be an extension of graded domains. Suppose that $R$ is integrally closed and that $T$ is integral over $R$. Then by \cite[Theorem 5.16]{AM}, $R\subseteq T$ satisfies (g)$\GD$.

Let $A \subseteq B$ be a unital extension of commutative rings. It is known that if the canonical contraction map $\Spec(B) \to \Spec(A)$ is open (relative to the Zariski topology), then $A\subseteq B$ satisfies $\GD$. (For a particularly accessible proof
of this, see \cite{DP2}. In fact, much more is true: see \cite[Corollaire 3.9.4 (i), page 254]{EGA}.) Assume that $R\subseteq T$ is an extension of graded domains. It is well-known that the contraction map on prime spectra restricts to a well defined function
$$
F:\Spec_h(T)\to \Spec_h(R),
\, \, Q \mapsto Q \cap R
$$
of topological spaces which is continuous (with respect to the
subspace topology induced by the Zariski topology).

\begin{proposition} 
Let $R\subseteq T$ be an extension of graded domains. Suppose that the canonical continuous map $F:\Spec_h(T)\to\Spec_h(R)$ is open (with respect to Zariski subspace topologies). Then $R\subseteq T$ satisfies $\gGD$.
\end{proposition}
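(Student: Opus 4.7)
My plan is to combine a short topological observation (where openness of $F$ enters) with a graded Zorn's lemma argument of Dobbs--Papick type. Fix $P_0\subseteq P$ in $\Spec_h(R)$ and $Q\in\Spec_h(T)$ with $Q\cap R=P$.

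First, I would prove that $Q$ lies in the Zariski closure, inside $\Spec_h(T)$, of the fiber $F^{-1}(\{P_0\})$. For any open neighborhood $U$ of $Q$, openness of $F$ makes $F(U)$ an open subset of $\Spec_h(R)$ containing $P$. Any basic open $D(f)\cap \Spec_h(R)=\{P'\in\Spec_h(R):f\notin P'\}$ that contains $P$ automatically contains every generalization of $P$, because $P_0\subseteq P$ and $f\notin P$ force $f\notin P_0$. Writing $F(U)$ as a union of such basic opens gives $P_0\in F(U)$, hence $U\cap F^{-1}(\{P_0\})\neq\emptyset$; taking $U=\Spec_h(T)$ also shows $F^{-1}(\{P_0\})\neq\emptyset$.

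Next, I would set $J:=\bigcap_{Q'\in F^{-1}(\{P_0\})}Q'$, a homogeneous ideal of $T$, and note $J\cap R=P_0$. Since the Zariski closure of $F^{-1}(\{P_0\})$ in $\Spec_h(T)$ is $V(J)\cap \Spec_h(T)$, the previous step yields $J\subseteq Q$. Let $S:=H_R\setminus P_0$, $W:=H_T\setminus Q$, and $\Sigma:=SW$, a multiplicative subset of homogeneous elements of $T$. The key calculation is $J\cap\Sigma=\emptyset$: if $sw\in J$ with $s\in S$ and $w\in W$, then in each prime $Q'\in F^{-1}(\{P_0\})$ the element $s$ lies outside $Q'\cap R=P_0$, forcing $w\in Q'$; hence $w\in J\subseteq Q$, contradicting $w\in W$.

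Finally, the graded analogue of the standard prime existence theorem (Zorn's lemma applied to homogeneous ideals containing $J$ and disjoint from $\Sigma$, with primality verified on pairs of homogeneous elements in the usual way) furnishes a homogeneous prime $Q_0\supseteq J$ of $T$ with $Q_0\cap\Sigma=\emptyset$. Since $1\in S\cap W$ we also have $Q_0\cap S=\emptyset$ and $Q_0\cap W=\emptyset$; together with the homogeneity of $Q_0$ these force $Q_0\cap R\subseteq P_0$ (hence equal to $P_0$, using $Q_0\supseteq J\supseteq P_0$) and $Q_0\subseteq Q$, as required. The essential use of openness lives in the first step; the only non-formal calculation is the verification $J\cap\Sigma=\emptyset$, which is the single place where the primality of the $Q'\in F^{-1}(\{P_0\})$ is used.
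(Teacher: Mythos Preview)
Your proof is correct and takes a genuinely different route from the paper's. The paper argues by contradiction: assuming $\gGD$ fails, it invokes Kaplansky's criterion (Exercise~37) to produce a (necessarily homogeneous) prime $Q$ minimal over $PT$ together with an element equation $\sum p_it_i=dt$ with $d\in R\setminus P$ and $t\in T\setminus Q$; openness is then applied to the basic open set $X_t\cap\Spec_h(T)$ to find some $Q_0$ over $P$ avoiding $t$, and the equation forces $d\in Q_0\cap R=P$, a contradiction.

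You instead give a direct construction. Openness is used once, to show $Q$ lies in the closure of the fiber $F^{-1}(\{P_0\})$ (via stability of open sets under generalization), which gives $J\subseteq Q$ for $J=\bigcap_{Q'\in F^{-1}(\{P_0\})}Q'$. The remaining work is a graded prime--existence argument relative to the homogeneous multiplicative set $\Sigma=(H_R\setminus P_0)(H_T\setminus Q)$, with the single substantive verification being $J\cap\Sigma=\emptyset$. Both proofs rest on the same topological kernel (open $\Rightarrow$ stable under generalization), but your packaging avoids the contradiction setup and the element--level manipulation, and stays entirely inside the graded world rather than passing through the ungraded Kaplansky criterion; the paper's version, by contrast, is shorter and leans on a ready-made classical lemma.
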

\begin{proof}
Assume $R\subseteq T$ does not satisfy $\gGD$. Then there exist homogeneous prime ideals $P\subsetneq P_1$ of $R$ and a homogeneous prime ideal $Q_1$ in $T$ such that $Q_1\cap R=P_1$ and no $Q\in\Spec_h(T)$ satisfies both $Q\subseteq Q_1$ and $Q\cap R=P$. Note that if there exists a prime ideal $Q'$ of $T$ satisfying both $Q'\subseteq Q_1$ and $Q'\cap R=P$, then using \cite[Lemma 2.7]{hs18} we obtain that $Q'^*\subseteq Q_1$ and $Q'^*\cap R=P$ which is absurd. Therefore $R\subseteq T$ does not satisfy GD. Applying \cite[Exercise 37, p. 44]{k74}, we find $Q\in\Spec(T)$ such that $Q$ is minimal over $PT$, $Q\subseteq Q_1$ and $$PT\cap(R\setminus P)(T\setminus Q)\neq\emptyset.$$ Hence $Q=Q^*\in\Spec_h(T)$. The above display leads to an equation $\sum_{i=1}^np_it_i=dt$, for some elements $p_i\in P$, $t_i\in T$, $d\in R\setminus P$ and $t\in T\setminus Q$. Set $X:=\Spec(T)$ and let $X_t$ denote the set of elements of $X$ that do not contain $t$. Note that $X_t$ is a basic Zariski open subset of $X$. Then $X_t\cap\Spec_h(T)$ is an open subset of $\Spec_h(T)$. By assumption $F(X_t\cap\Spec_h(T))$ is an open subset of $\Spec_h(R)$. It follows that $P\in F(X_t\cap\Spec_h(T))$ since $Q\in X_t\cap\Spec_h(T)$ and $P\subseteq Q\cap R$. Hence there exists $Q_0\in X_t\cap\Spec_h(T)$ such that $Q_0\cap R=P$. Then $dt=\sum_{i=1}^np_it_i\in PT\subseteq Q_0$, although $t\notin Q_0$. Hence $d\in Q_0$, and so $d\in Q_0\cap R=P$, the desired contradiction, to complete the proof.
\end{proof}

Let $R=\bigoplus_{\alpha\in\Gamma}R_{\alpha}$ be a graded integral
domain and $T$ be a homogeneous overring of $R$. As in \cite{hs18a}, we say that $T$ is
an \emph{h-flat} overring of $R$ if for each homogeneous prime ideal
$Q$ of $T$, one has $R_{Q\cap R}=T_Q$. This is a graded analogue of flat overrings investigated by Richman \cite[Theorem 2]{r65}.

\begin{proposition} \label{h-flat}
Suppose that the graded domain $T$ is an h-flat overring of $R$. Then $R\subseteq T$ satisfies $\gGD$.
\end{proposition}
\begin{proof}
Assume that $P_0\subseteq P_1$ are homogeneous prime ideals of $R$ and $Q_1$ is a homogeneous prime ideal of $T$ such that $Q_1\cap R=P_1$. Then $R_{Q_1\cap R}=T_{Q_1}$ is a flat overring of $R$ and $Q_1T_{Q_1}\cap R=P_1$. Thus by \cite{DP2}, there exists a prime ideal $Q_0$ of $T$ such that $Q_0\subseteq Q_1$ and $P_0=Q_0T_{Q_1}\cap R=Q_0\cap R$. Using \cite[Lemma 2.7]{hs18}, we have $P_0=P_0^*=Q_0^*\cap R$, and hence we can assume that $Q_0=Q_0^*$ is homogeneous to complete the proof.
\end{proof}

\section{Graded going down domains}

The most natural examples of going-down domains are Pr\"ufer domains, domains of (Krull) dimension at most $1$, and certain pullbacks. In this section we define the graded-analogue of going-down domains and study their properties.

\begin{definition} Let $R=\bigoplus_{\alpha\in\Gamma}R_{\alpha}$ be a graded domain. Then $R$ is said to be a graded going-down domain (for short, a $\gGD$ domain) if, for every homogeneous overring $T$ of $R$, the extension $R\subseteq T$ satisfies $\gGD$.
\end{definition}

It is easy to show that a $\GD$ graded domain is a $\gGD$ domain. Note that Example \ref{e} shows that the converse is no longer true.

We say that $R= \bigoplus_{\alpha\in\Gamma}R_{\alpha}$ is a {\em graded-valuation domain} (gr-valuation domain) if either $aR \subseteq bR$ or $bR \subseteq aR$ for all $a,b \in H$.

\begin{lemma}\label{vgd} Every gr-valuation domain is a $\gGD$ domain.
\end{lemma}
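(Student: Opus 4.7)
The plan is to show first that any homogeneous overring $T$ of $R$ inherits the gr-valuation property, so that $\Spec_h(T)$ is totally ordered, and then to construct the desired $Q_0$ by extracting a homogeneous prime of $T$ from the extended ideal $P_0T$. For the inheritance, observe that any nonzero homogeneous element of $T\subseteq R_H$ has the form $a/s$ with $a,s\in H$; given two such, say $t_i=a_i/s_i$, the ratio $t_1/t_2=(a_1s_2)/(a_2s_1)$ lies in $T$ whenever it lies in $R$, and the gr-valuation hypothesis on $R$ applied to $a_1s_2$ and $a_2s_1$ yields exactly that one of $t_1/t_2$ or $t_2/t_1$ lies in $R\subseteq T$. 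Consequently $\Spec_h(T)$ is a chain.

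Now let $P_0\subseteq P$ in $\Spec_h(R)$ and $Q\in\Spec_h(T)$ with $Q\cap R=P$. The technical crux, which I expect to be the main obstacle, is the claim that $(P_0T)\cap(H\setminus P_0)=\emptyset$. Assume for contradiction $s\in P_0T$ with $s\in H\setminus P_0$; extracting homogeneous components, write $s=\sum_{i=1}^n p_it_i$ with $p_i\in P_0\cap H$, each $t_i$ homogeneous in $T$, and each product $p_it_i$ of degree $\deg s$. The gr-valuation hypothesis on $R$ lets me reindex so that $p_iR\subseteq p_1R$ for all $i$, say $p_i=p_1r_i$ with $r_i\in R$ homogeneous; then $s=p_1t''$ with $t'':=\sum r_it_i\in T$, so $s/p_1\in T$. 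Applying gr-valuation in $R$ to $s$ and $p_1$ yields two cases: if $sR\subseteq p_1R$ then $s\in p_1R\subseteq P_0$, contradicting $s\notin P_0$; otherwise $p_1R\subsetneq sR$ gives $p_1/s\in R$, whence primality of $P_0$ (with $p_1\in P_0$ and $s\notin P_0$) forces $p_1/s\in P_0\subseteq P\subseteq Q$, while the identity $(s/p_1)(p_1/s)=1$ in $T$ exhibits $p_1/s$ as a unit of $T$, contradicting $Q\neq T$.

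With the disjointness in hand, a standard Zorn argument on homogeneous ideals of $T$ produces a homogeneous prime $Q_0$ of $T$ containing $P_0T$ and maximal among those disjoint from $H\setminus P_0$; the two inclusions $P_0\subseteq Q_0\cap R$ and $Q_0\cap R\subseteq P_0$ then give $Q_0\cap R=P_0$. Finally, since $\Spec_h(T)$ is a chain, $Q_0$ and $Q$ are comparable: if $Q_0\subseteq Q$ we are done, while otherwise $Q\subsetneq Q_0$ contracts to $P\subseteq P_0$, forcing $P=P_0$ so that $Q$ itself serves as the required prime.
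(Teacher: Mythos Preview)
Your argument is correct and takes a genuinely different route from the paper's. The paper passes to the (non-graded) localizations $V_P\subseteq T_Q$, invokes the fact that $V_P$ is an ordinary valuation domain (via \cite[Theorem~4.4]{s14}) and hence a classical $\GD$ domain, obtains a prime $Q_0$ in $T_Q$ below $Q$ lying over $P_0$, and then checks that $Q_0=Q_0^*$ is homogeneous. By contrast, you never leave the graded setting: you first show that the homogeneous overring $T$ is itself gr-valuation, then establish the key disjointness $(P_0T)\cap(H\setminus P_0)=\emptyset$ by a direct divisibility argument exploiting the gr-valuation hypothesis together with the given prime $Q$, produce $Q_0$ over $P_0$ via Zorn, and finally use the chain structure of $\Spec_h(T)$ to force $Q_0\subseteq Q$ (or reduce to $P=P_0$). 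Your approach is entirely self-contained and avoids both the external input that $V_P$ is a valuation domain and the somewhat delicate passage from a non-homogeneous prime back to a homogeneous one; the paper's approach is shorter because it outsources the essential work to the classical theory. Either way the proof is sound; your version has the virtue of making the graded mechanism transparent.
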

\begin{proof}
Let $T$ be a homogeneous overring of a gr-valuation domain $V$. Suppose that $P_0\subseteq P$ are homogeneous prime ideals of $V$ and $Q$ is a homogeneous prime ideal of $T$ such that $Q\cap V=P$. Thus $V_P\subseteq T_Q$ and $QT_Q\cap V_P=PV_P$. Note that $V_P$ is a valuation domain \cite[Theorem 4.4]{s14} and hence is a $\GD$ domain. So there exists a prime ideal $Q_0$ of $T$ such that $Q_0\subseteq Q$ and $Q_0T_Q\cap V_P=P_0V_P$. Then $Q_0\cap V=P_0$. It is easy to see that $Q_0=Q_0^*$ is a homogeneous prime ideal of $V$.
\end{proof}

Recall that a commutative ring $A$ is said to be \emph{treed} in case, as a poset under inclusion, $\Spec(A)$ is a tree. As the graded analogue, we next define the notion of a \emph{g-treed domain}. The graded domain $R$ is said to be a {\it g-treed domain} if, as a poset under inclusion, $\Spec_h(R)$ is a tree; i.e., if no  homogeneous prime ideal of $R$ contains incomparable  homogeneous prime ideals of $R$. The proof of the following proposition is a straightforward adaptation of the proof of \cite[Theorem 2.2]{d74}; for the sake of completeness, we include the details. We note that for an extension of $\Gamma$-graded domains $R\subseteq S$ and a homogeneous element $u\in S$ of degree $\alpha$, the algebra $R[u]$ is a $\Gamma$-graded domain in which $(R[u])_{\beta}=\sum_{n\geq0}R_{\beta-n\alpha}\ u^n$ for each $\beta\in\Gamma$.

\begin{proposition}\label{gtreed}
Let $R=\bigoplus_{\alpha\in\Gamma}R_{\alpha}$ be a graded domain. If $R$ is a $\gGD$ domain, then $R$ is a g-treed domain.
\end{proposition}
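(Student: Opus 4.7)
The plan is to adapt Dobbs' classical argument from \cite[Theorem 2.2]{d74} to the graded setting. Assume for contradiction that $R$ is a $\gGD$ domain which is not g-treed: there exist incomparable homogeneous primes $P_1,P_2\in\Spec_h(R)$ both contained in a common homogeneous prime $N$.

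First I would pass to a graded-local setting by replacing $R$ with the homogeneous overring $R_{H\setminus N}$. This overring inherits the $\gGD$ property, since any homogeneous overring of $R_{H\setminus N}$ is also a homogeneous overring of $R$ and homogeneous primes correspond bijectively to those of $R$ contained in $N$; moreover $N$ becomes the unique maximal homogeneous ideal of $R_{H\setminus N}$, while $P_1$ and $P_2$ remain incomparable. Since $P_1,P_2$ are homogeneous and incomparable, each is generated by its homogeneous elements, so I can pick homogeneous $a\in P_1\setminus P_2$ and $b\in P_2\setminus P_1$. The element $u:=b/a\in R_H$ is then homogeneous in the $\langle\Gamma\rangle$-grading, and $T:=R[u]$ is a homogeneous overring of $R$.

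The key observation is that no homogeneous prime of $T$ can contract to $P_1$: if $Q\in\Spec_h(T)$ with $Q\cap R=P_1$, then $a\in P_1\subseteq Q$ and the relation $b=au$ force $b\in Q\cap R=P_1$, contrary to the choice of $b$. Hence, once a homogeneous prime $Q^\ast$ of $T$ lying over $N$ is produced, applying $\gGD$ to the chain $P_1\subset N$ and $Q^\ast$ furnishes some $Q_0\subseteq Q^\ast$ in $\Spec_h(T)$ with $Q_0\cap R=P_1$, and this contradiction completes the proof.

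Producing such a $Q^\ast$ is the main technical obstacle: it reduces to showing $NT\neq T$, because any maximal homogeneous ideal of $T$ that contains the homogeneous ideal $NT$ will be prime and will contract to $N$ (as $N$ is the only maximal homogeneous ideal of $R$). Assuming instead that $1=\sum_{i=0}^{n}c_iu^i$ with $c_i\in N$ and clearing by $a^n$ gives $(1-c_0)a^n=b\cdot r$ for some $r\in R$; the plan is to extract the appropriate homogeneous components in the $\langle\Gamma\rangle$-grading so that the resulting analogue of $1-c_0$ is a homogeneous element of $R\setminus N$, hence a unit by the graded-locality of $R$, whence $a^n\in bR\subseteq P_2$ and so $a\in P_2$, contradicting the choice of $a$. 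Careful bookkeeping of degrees, together with use of a compatible total order on $\langle\Gamma\rangle$, is the technical heart of this last step.
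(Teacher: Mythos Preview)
Your proposal is correct and follows the same overall strategy as the paper's proof, which is likewise an adaptation of \cite[Theorem 2.2]{d74}. The one substantive difference is in how survival of the maximal homogeneous ideal in the simple overring is obtained. The paper invokes the $(u,u^{-1})$-Lemma \cite[Theorem 55]{k74} as a black box: $MR_M$ survives in $R_M[u]$ or in $R_M[u^{-1}]$, and by symmetry one may assume the former, whence $MR_{H\setminus M}[u]\neq R_{H\setminus M}[u]$. You instead argue directly that $NT\neq T$ by clearing denominators in a putative relation $1=\sum c_iu^i$ and extracting the degree-$n\deg(a)$ homogeneous component, which yields $(1-c_0^{(0)})a^n\in bR$ with $c_0^{(0)}\in N$ homogeneous of degree $0$; graded-locality then makes $1-c_0^{(0)}$ a unit and forces $a\in P_2$. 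Your computation is essentially a graded re-proof of the relevant half of the $(u,u^{-1})$-Lemma, with the homogeneity of $N$ doing the work that locality does in the ungraded case; no appeal to a total order on $\langle\Gamma\rangle$ is actually needed for this step. The paper also applies $\gGD$ directly to $R\subseteq R_{H\setminus M}[u]$ rather than first replacing $R$ by $R_{H\setminus N}$, but that is purely cosmetic.
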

\begin{proof}
Suppose the assertion fails. Then some homogeneous prime ideal $M$ of $R$ contains incomparable homogeneous prime ideals $P_1$ and $P_2$ of $R$. Choose homogeneous elements $b\in P_1\setminus P_2$ and $c\in P_2\setminus P_1$. Set $u:=bc^{-1}$. By \cite[Theorem 55]{k74}, $MR_M$ survives in either $R_M[u]$ or $R_M[u^{-1}]$, say, in $R_M[u]$. Hence $MR_{H\setminus M}[u]\neq R_{H\setminus M}[u]$. Choose a homogeneous maximal ideal $N$ of $R_{H\setminus M}[u]$ which contains $MR_{H\setminus M}[u]$. Necessarily, $N\cap R_{H\setminus M}=MR_{H\setminus M}$, and so $N\cap R=M$. Since $R$ is a $\gGD$-domain, $R\subseteq R_{H\setminus M}[u]$ satisfies $\gGD$. Thus, there exists a homogeneous prime ideal $Q$ of $R_{H\setminus M}[u]$ contained in $N$ and lying over $P_2$. Then $c\in Q$ and $b=cu\in Q\cap R=P_2$, contradicting the choice of $b$.
\end{proof}

Recall that a domain $D$ is called a \emph{finite conductor domain} if every intersection of two principal ideals of $D$ is finitely
generated. All Pr\"{u}fer damains are finite conductor domains. Let $R=\bigoplus_{\alpha\in\Gamma}R_{\alpha}$ be a graded domain. We define $R$ to be a {\it g-finite conductor} domain if, for all $a$, $b\in H$, there exist finitely many elements $u_1,\ldots,u_n\in H$ such that
$(a)\cap(b)=(u_1,\ldots,u_n)$.

Recall also that a graded domain $R$ is called a graded-Pr\"{u}fer domain if each nonzero finitely generated homogeneous ideal of $R$ is invertible (see \cite{ac13}). It is well-known that $R$ is a graded-Pr\"{u}fer domain if and only if $R_{H\setminus P}$ is a gr-valuation domain for each $P\in\Spec_h(R)$ (\cite[Theorem 3.1]{acz18} or \cite[Theorem 4.4]{s14}).

\begin{lemma}\label{gfc} Let $R=\bigoplus_{\alpha\in\Gamma}R_{\alpha}$ be a graded domain. If $R$ is a graded-Pr\"{u}fer domain, then $R$ is a g-finite conductor domain.
\end{lemma}
\begin{proof}
It follows from the fact that $(a)\cap(b)=ab(a,b)^{-1}$ for all $a,b\in H$, and that $R$ is a graded-Pr\"{u}fer domain.
\end{proof}

In order to prove Theorem \ref{qP}, we need Proposition \ref{conductor}. For this we need the following lemma which is essential \cite[Lemma]{mc72}.

\begin{lemma}\label{mc} Let $R=\bigoplus_{\alpha\in\Gamma}R_{\alpha}$ be a graded domain and let $u\in R_H$. Suppose that $R$ is integrally closed in $R[u]$, and $I=(R:_Ru)$ is a finitely generated ideal of $R$. If $Iu\subseteq\sqrt{I}$, then $u\in R$.
\end{lemma}

To prove the next proposition, we may use the same argument as employed in the proof of \cite[Theorem 1]{mc72}. One might take into consideration that in our proof we appeal to Lemma \ref{mc} instead of \cite[Lemma]{mc72} in the context the proof of \cite[Theorem 1]{mc72}.

\begin{proposition}\label{conductor} Let $R=\bigoplus_{\alpha\in\Gamma}R_{\alpha}$ be an integrally closed graded domain whose homogeneous prime ideals are linearly ordered. Assume that $R$ is a g-finite conductor domain. Then $R$ is a gr-valuation domain.
\end{proposition}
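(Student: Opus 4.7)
The plan is to show that for arbitrary $a,b \in H$ one of $a/b$ or $b/a$ lies in $R$; equivalently, writing $u = a/b \in R_H$, to show $u \in R$ or $u^{-1} \in R$. I will argue by contradiction, assuming neither holds, and then extract two homogeneous primes that cannot be compared, contradicting the chain hypothesis.

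First I would introduce the two conductors $I = (R :_R u)$ and $J = (R :_R u^{-1})$. Since $u$ is homogeneous, one checks directly that $I$ (and similarly $J$) is a homogeneous ideal. The standard identity $aI = aR \cap bR$ combined with the g-finite conductor assumption gives $aR \cap bR = (u_1,\dots,u_n)$ with $u_i \in H$; writing $u_i = at_i$ (possible since $u_i \in aR$) with $t_i \in H$ yields $I = (t_1,\dots,t_n)$, a finitely generated homogeneous ideal. A parallel computation shows $J = uI$ is also homogeneous and finitely generated.

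Then I would invoke the preceding Lemma twice, with $u$ and with $u^{-1}$. Since $R$ is integrally closed and therefore integrally closed in $R[u]$ and $R[u^{-1}]$, and $I$, $J$ are finitely generated, the assumptions $u \notin R$ and $u^{-1} \notin R$ force $Iu \not\subseteq \sqrt{I}$ and $Ju^{-1} \not\subseteq \sqrt{J}$. Exploiting homogeneity of $I, J, u$, I can pick homogeneous $s \in I$ with $su \notin \sqrt{I}$, and homogeneous $w \in I$ (so that $s' := uw \in J$) with $w = s' u^{-1} \notin \sqrt{J}$. Because $su \notin \sqrt{I}$ and $w \notin \sqrt{J}$, I can fix minimal primes $P$ over $I$ with $su \notin P$ and $P'$ over $J$ with $w \notin P'$; the Introduction's observation that a minimal prime over a homogeneous ideal is itself homogeneous places both $P, P' \in \Spec_h(R)$.

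Finally, the hypothesis that $\Spec_h(R)$ is linearly ordered forces $P \subseteq P'$ or $P' \subseteq P$, and each case collapses immediately: in the first, $w \in I \subseteq P \subseteq P'$ contradicts $w \notin P'$; in the second, the identity $(su)u^{-1} = s \in R$ shows $su \in J \subseteq P' \subseteq P$, contradicting $su \notin P$. Either way we reach a contradiction, so $u \in R$ or $u^{-1} \in R$, and $R$ is a gr-valuation domain. The only delicate step I foresee is the bookkeeping around homogeneity: verifying that the generators of $aR \cap bR$ can be exchanged for homogeneous generators of $I$, and that the obstructions $s \in I$ and $w \in I$ produced by the preceding Lemma can be chosen homogeneous; after that, the matching of $P$ and $P'$ against the chain condition is essentially automatic.
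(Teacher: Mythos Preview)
Your argument is correct and is precisely the graded adaptation of McAdam's proof of \cite[Theorem~1]{mc72} that the paper invokes with ``mutatis mutandis''; the conductor identities $aI=aR\cap bR$ and $J=uI$, the two applications of the preceding Lemma, and the comparison of minimal (hence homogeneous) primes over $I$ and $J$ match that argument step for step. The homogeneity bookkeeping you flag is handled exactly as you indicate, so there is nothing further to add.
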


\begin{theorem}\label{qP} Let $R=\bigoplus_{\alpha\in\Gamma}R_{\alpha}$ be a graded domain. Then $R$ is a graded-Pr\"{u}fer domain if and
only if the following three conditions hold:
\begin{itemize}
\item[(1)] $R$ is an integrally closed domain,

\item[(2)] $R$ is a g-finite conductor domain, and

\item[(3)] $R$ is a g-treed domain.
\end{itemize}
\end{theorem}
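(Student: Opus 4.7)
The plan is to prove both directions by reducing to the characterization recalled just before the theorem: $R$ is graded-Pr\"ufer if and only if $R_{H\setminus P}$ is a gr-valuation domain for every $P\in\Spec_h(R)$.

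For the forward implication, assume $R$ is graded-Pr\"ufer. Condition (2) is exactly the lemma immediately preceding the theorem. For condition (1), $R$ is the intersection of the gr-valuation homogeneous localizations $R_{H\setminus M}$ over $M\in\Max_h(R)$; each is integrally closed in $R_H$, so $R$ is too. For condition (3), fix any $M\in\Spec_h(R)$. Then $R_{H\setminus M}$ is a gr-valuation domain, whose homogeneous primes are linearly ordered by definition, and these correspond under contraction to precisely the homogeneous primes of $R$ contained in $M$. Hence $M$ cannot contain incomparable homogeneous primes of $R$, so $R$ is g-treed.

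For the converse, assume (1)--(3) and fix an arbitrary $P\in\Spec_h(R)$. The strategy is to verify that $R_{H\setminus P}$ meets the three hypotheses of Proposition~\ref{conductor}, so that $R_{H\setminus P}$ is a gr-valuation domain; then the characterization above yields that $R$ is graded-Pr\"ufer. The transfer of each hypothesis from $R$ to the homogeneous localization is routine. Integral closure descends to localization at a homogeneous multiplicative set because the homogeneous quotient field of $R_{H\setminus P}$ is still $R_H$. The g-finite conductor property transfers because if $(a)\cap(b)=(u_1,\dots,u_n)$ in $R$ for homogeneous $a,b,u_i$, then the same equation holds after localizing at $H\setminus P$ (and any pair of homogeneous elements of $R_{H\setminus P}$ can be scaled by units of $R_{H\setminus P}$ to lie in $H$). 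Finally, the homogeneous primes of $R_{H\setminus P}$ correspond bijectively to the homogeneous primes of $R$ contained in $P$, and these are linearly ordered by the g-treed hypothesis applied with $M=P$.

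The principal obstacle is not any single step but assembling the correct graded analogues of standard localization facts so that Proposition~\ref{conductor} applies; once it does, the proof essentially writes itself. The only point that genuinely uses the graded setting is the identification of the homogeneous quotient field of $R_{H\setminus P}$ with $R_H$, which guarantees that the integrally closed and g-finite conductor conditions survive localization at $H\setminus P$.
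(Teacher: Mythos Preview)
Your converse direction is essentially identical to the paper's: localize at $H\setminus P$, check that integral closure, the g-finite conductor property, and linear ordering of homogeneous primes all pass to $R_{H\setminus P}$, and then invoke Proposition~\ref{conductor}. The paper in fact writes out only this direction, treating the forward implication as understood from the earlier lemma and standard facts about graded-Pr\"ufer domains, so your explicit handling of (1) and (3) is additional detail rather than a different approach. One small wording point: in your argument for (1) you say each $R_{H\setminus M}$ is integrally closed ``in $R_H$,'' but condition~(1) requires integral closure in the full quotient field; this is fine because gr-valuation domains are in fact integrally closed there (the integral closure of a graded domain is again graded and contained in $R_H$, and a gr-valuation domain admits no proper homogeneous integral extension inside $R_H$), but you should state it that way.
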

\begin{proof}
It is enough to show by \cite[Theorem 4.4]{s14} that $R_{H\setminus P}$ is a gr-valuation domain for each homogeneous prime ideal $P$ of $R$. Let $P$ be  a homogeneous prime ideal of $R$. Since $R$ is integrally closed, $R_{H\setminus P}$ is an integrally closed domain. To show that $R_{H\setminus P}$ is g-finite conductor, let $a,b\in H$. Then there are $u_1,\ldots,u_n\in H$ such that $(a)\cap(b)=(u_1,\ldots,u_n)$. Then $(a/1)\cap(b/1)=((a)\cap(b))R_{H\setminus P}=(u_1,\ldots,u_n)R_{H\setminus P}$. This shows that $R_{H\setminus P}$ is a g-finite conductor domain. Finally the homogeneous prime ideals of $R_{H\setminus P}$ are linearly ordered by inclusion since $R$ is g-treed. Thus $R_{H\setminus P}$ is a gr-valuation domain by Proposition \ref{conductor}.
\end{proof}

\begin{corollary}\label{3.11} Let $R=\bigoplus_{\alpha\in\Gamma}R_{\alpha}$ be a graded domain. Then $R$ is a graded-Pr\"{u}fer domain if and
only if the following three conditions hold:
\begin{itemize}
\item[(1)] $R$ is an integrally closed domain,

\item[(2)] $R$ is a g-finite conductor domain, and

\item[(3)] $R$ is a $\gGD$ domain.
\end{itemize}
\end{corollary}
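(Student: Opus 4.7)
The plan is to leverage the immediately preceding theorem, which characterizes graded-Pr\"ufer domains via conditions (1), (2), and the g-treed condition. The pivot is the earlier Proposition that every gGD domain is g-treed.

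For the implication $(1)+(2)+(3)\Rightarrow$ graded-Pr\"ufer, the cited Proposition upgrades (3) to the g-treed hypothesis, so all three hypotheses of the preceding theorem are met and $R$ is graded-Pr\"ufer.

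For the converse, (1) and (2) are standard: each $R_{H\setminus P}$ is gr-valuation and hence integrally closed, while the earlier Lemma using $(a)\cap(b)=ab(a,b)^{-1}$ gives g-finite conductor. The substantive step is (3): that a graded-Pr\"ufer domain is gGD. Let $T$ be a homogeneous overring of $R$, let $P_0\subseteq P$ be homogeneous primes of $R$, and let $Q\in\Spec_h(T)$ with $Q\cap R=P$. Since $Q\cap H\subseteq P$, the set $S:=H\setminus P$ is disjoint from $Q$, so $T_S$ is a homogeneous overring of $V:=R_{H\setminus P}$. By \cite[Theorem 4.4]{s14}, $V$ is a gr-valuation domain, hence a gGD domain by Lemma \ref{vgd}. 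Applying the gGD property of $V$ to the extension $V\subseteq T_S$ with the data $P_0V\subseteq PV$ and $QT_S$ produces a homogeneous prime $Q_0'\subseteq QT_S$ of $T_S$ satisfying $Q_0'\cap V=P_0V$. Contracting $Q_0'$ back to $T$ yields the required $Q_0\in\Spec_h(T)$ with $Q_0\subseteq Q$ and $Q_0\cap R=P_0$.

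The main point of care is the localization bookkeeping in the converse: one must verify that $T_S$ is truly a homogeneous overring of $V$, so that Lemma \ref{vgd} genuinely applies, and that the contraction of a homogeneous prime of $T_S$ back to $T$ remains homogeneous and lies over the correct prime of $R$. These verifications are routine given the framework established in Section 1, but they should not be glossed over.
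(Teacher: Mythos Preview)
Your proof is correct and follows essentially the same route as the paper. In particular, your localization $T_S$ with $S=H\setminus P$ coincides with the paper's $T_{H\setminus Q}$ (since $Q\cap R=P$ forces $H\setminus Q=H\setminus P$), and both arguments then invoke Lemma~\ref{vgd} for the gr-valuation domain $R_{H\setminus P}$ and contract the resulting prime back to $T$.
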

\begin{proof}
In view of Proposition \ref{gtreed}, Lemma \ref{gfc} and Theorem \ref{qP}, it is enough to show that every graded-Pr\"{u}fer domain is a $\gGD$ domain. Assume that $R$ is a graded-Pr\"{u}fer domain and $T$ is an overring of $R$. Then $T$ is an h-flat overring of $R$ by \cite[Lemma 3.4]{hs18a}. Hence $R\subseteq T$ satisfies $\gGD$ by Proposition \ref{h-flat}. Therefore $R$ is a $\gGD$ domain.
\end{proof}

As we saw in the beginning of the present section that every $\GD$ graded domain is a $\gGD$ domain. The next example shows that the converse is no longer true.

\begin{example}\label{e} Let $k$ be a field and $X$ and $Y$ be algebraically independent indeterminates over $k$. Put $R=k[X,X^{-1}][Y]$, which is a $\mathbb{Z}\oplus(\mathbb{N}\cup\{0\})$-graded integral domain. Then $R$ is a 2-dimensional Noetherian domain, and hence it is not a $\GD$ domain by \cite[Proposition 7]{d73}. However it is a $\gGD$ domain, since $R$ is a graded-Pr\"{u}fer domain by \cite[Theorem 6.2]{acz18}.
\end{example}

In \cite{ss23}, we investigated the $\gGD$ property of graded domains in pullback of graded domains and gave various examples of $\gGD$ domains.

In view of Corollary \ref{3.11}, it seems natural to seek to determine test overrings for the ``$\gGD$ domain" property. Theorem \ref{gr-valuation} will present the  graded-theoretic analogue of the test overring result for going-down domains \cite[Theorem 1]{dp76}. We first give the graded analogue of \cite[Proposition]{dp76}.

\begin{proposition} \label{d} Let $R\subseteq T$ be an extension of graded domains, where $T$ is g-treed with a unique homogeneous
maximal ideal. If $R\subseteq R[u]$ satisfies $\gGD$ for each  homogeneous element $u$ in $T$, then $R\subseteq T$ satisfies $\gGD$.
\end{proposition}

\begin{proof}
Suppose that the assertion fails. Then there exist homogeneous prime ideals $P\subseteq P_1$ of $R$ and a homogeneous prime ideal $Q_1$ in $T$ such that $Q_1\cap R=P_1$ and no $Q\in\Spec_h(T)$ satisfies both $Q\subseteq Q_1$ and $Q\cap R=P$. By \cite[Exercise 37, page 44, (ii)$\Rightarrow$ (i)]{k74}, the prime ideal $N$ of $T$ which is minimal over $PT$ also satisfies $P_2:=N\cap R\supset P$. Note that $N$ is homogeneous and uniquely determined since
$T$ has a unique homogeneous maximal ideal and it is g-treed and that $N=\sqrt{PT}$. Thus, choosing a homogeneous element $r\in(N\cap R)\setminus P$ leads to an equation $r^m=\sum p_iw_i$ for some homogeneous elements $p_i$ in $P$ and $w_i$ in $T$ and $m\geq1$.

Considering the radicals of $p_iT$ and using the assumption that the homogenous prime ideals of $T$ are linearly ordered by inclusion, we may relabel the $p_j$ so that, for each $i$, $p_1$ divides a power of $p_i$ (with quotient in $T$). Raising the above equation to a suitably high power, say the $t$th, gives a homogeneous element $w$ in $T$ such that $r^{mt}=p_1w$. Since $PT\subseteq N$, $N\cap R[w]$ is a homogeneous prime ideal of $R[w]$ containing $PR[w]$. Note that $N\cap R[w]$ contracts to $P_2=N\cap R$. Since $R\subseteq R[w]$ satisfies $\gGD$, any prime ideal $N_1$ in $R[w]$ minimal over $PR[w]$ contracts to $P$ \cite[Exercise 37, page 44]{k74}. Note that $N_1$ is a homogeneous prime ideal of $R[w]$ which is contained in $N\cap R[w]$. Hence $r^{mt}=p_1w\in PR[w]\subseteq N_1$, whence $r\in N_1\cap R = P$, a contradiction.
\end{proof}

\begin{lemma}\label{v} Let $R=\bigoplus_{\alpha\in\Gamma}R_{\alpha}$ be a graded domain. Let $W$ be a gr-valuation domain such that $R_H$ is a graded subring of the homogeneous quotient field of $W$, and put $V:=\bigoplus_{\alpha\in\langle\Gamma\rangle}(W\cap(R_H)_{\alpha})$. Then
\begin{itemize}
\item[(1)] $V$ is a gr-valuation overring of $R$;
\item[(2)] Each homogeneous ideal of $V$ is the contraction of a homogeneous ideal of $W$;
\item[(3)] $V\subseteq W$ satisfies $\gGD$.
\end{itemize}
\end{lemma}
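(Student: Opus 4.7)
The plan is to handle the three parts in order, with the gr-valuation property of $W$ serving as the main lever and the identity $V = W \cap R_H$ supplying the link back to the graded structure of $R_H$.

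For (1), $V$ inherits a $\langle\Gamma\rangle$-grading as the intersection of the two $\langle\Gamma\rangle$-graded subrings $W$ and $R_H$ of the homogeneous quotient field of $W$, so $V$ is a homogeneous overring of $R$ sitting inside $R_H$. To verify the gr-valuation property I take nonzero homogeneous $a,b\in V$; since $W$ is gr-valuation, one of $a,b$ divides the other in $W$, say $a=bw$ with $w\in W$. Then $w=a/b$ is homogeneous in $R_H$, so $w\in W\cap R_H=V$, and $aV\subseteq bV$.

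For (2), given a homogeneous ideal $I$ of $V$, the natural candidate homogeneous ideal of $W$ is $J:=IW$, and the real content is the reverse inclusion $IW\cap V\subseteq I$. For a homogeneous $x\in IW\cap V$ I would write $x=\sum_{i=1}^n a_iw_i$ with $a_i\in I$ homogeneous (using that $I$ is homogeneous) and, after extracting the degree-$\deg(x)$ components, with the $w_i$ homogeneous in $W$ as well. Exploiting that the nonzero homogeneous elements of $W$ are totally ordered by divisibility, I reindex so that $a_1\mid a_i$ in $W$ for every $i$, which yields a factorization $x=a_1w$ with $w\in W$ homogeneous; then $w=x/a_1$ lies in $R_H$, hence in $V$, and so $x=a_1w\in I$.

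For (3), given $P_0\subseteq P$ in $\Spec_h(V)$ and $Q\in\Spec_h(W)$ with $Q\cap V=P$, set $Q_0:=\sqrt{P_0W}$. Since the homogeneous primes of $W$ form a chain, $Q_0$ is a homogeneous prime of $W$, and $Q_0\subseteq Q$ follows at once from $P_0W\subseteq PW\subseteq Q$. For the lying-over assertion $Q_0\cap V=P_0$, I would invoke (2): choosing a homogeneous ideal $J$ of $W$ with $J\cap V=P_0$ and noting $P_0W\subseteq J$, one gets $P_0W\cap V\subseteq J\cap V=P_0$. Hence for a homogeneous $v\in Q_0\cap V$ some power $v^n$ lies in $P_0W\cap V=P_0$, and primality of $P_0$ forces $v\in P_0$.

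The main obstacle is part (2): one has to arrange simultaneously that the $a_i$ may be taken homogeneous in $I$, that the $w_i$ may be taken homogeneous in $W$, and that the valuation-style divisibility reduction preserves both properties so that the resulting $w=x/a_1$ is visibly homogeneous in $R_H$. Once (2) is in hand, (3) is essentially formal, since the radical of a homogeneous ideal in a gr-valuation domain is automatically a homogeneous prime.
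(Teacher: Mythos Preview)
Your arguments for (1) and (2) are correct and essentially coincide with the paper's. The only cosmetic difference in (2) is that the paper invokes the gr-valuation property of $V$ (established in (1)) to see that the ideal $(a_1,\dots,a_n)V$ is principal, whereas you invoke divisibility of the $a_i$ in $W$; either way one obtains $x\in a_1W$ and concludes $x/a_1\in W\cap R_H=V$. Your worry about simultaneously making the $w_i$ homogeneous is unnecessary: once the $a_i$ are homogeneous and $a_1\mid a_i$, the single element $w:=x/a_1$ is automatically homogeneous because $x$ and $a_1$ are.

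For (3) your route is genuinely different from the paper's and arguably cleaner. The paper localizes at the homogeneous maximal ideal $M$ of $W$, observes that $W_M$ is flat over $V_{M\cap V}$ (citing \cite{lm71}), deduces ordinary $\GD$ for $V_{M\cap V}\subseteq W_M$, pulls back a prime $Q_0$, and then passes to $Q_0^*$ to restore homogeneity. You instead set $Q_0:=\sqrt{P_0W}$ directly, use that minimal primes over the homogeneous ideal $P_0W$ are homogeneous and (since $\Spec_h(W)$ is a chain) unique, so $Q_0\in\Spec_h(W)$; then (2) gives $P_0W\cap V=P_0$, whence $Q_0\cap V=P_0$ by taking radicals. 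This avoids the external flatness citation and makes (3) an immediate consequence of (2), at the modest cost of checking that $\sqrt{P_0W}$ is a single homogeneous prime; the paper's approach, on the other hand, would adapt verbatim to situations where (2) is not available but a flatness or $\GD$ result for the localized extension is.
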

\begin{proof}
First of all we note that $V$ is a graded subring of $W$ and a homogeneous overring of $R$. Indeed, for each $\alpha\in\langle\Gamma\rangle$, we have $V_{\alpha}=W\cap(R_H)_{\alpha}\subseteq W\cap (W_{H(W)})_{\alpha}=W_{\alpha}$ and, for each $\alpha\in\Gamma$, we have $R_{\alpha}\subseteq W\cap(R_H)_{\alpha}=V_{\alpha}$.

(1) Assume that $x$ is a nonzero homogeneous element of $R_H$. Then $x$ or $x^{-1}$ is in $W$, and hence $x$ or $x^{-1}$ is in $V$. Thus $V$ is a gr-valuation domain.

(2) Let $I$ be a homogeneous ideal of $V$. Since $V$ is a graded subring of $W$, $IW\cap V$ is a homogeneous ideal of $V$. It is enough to show that $IW\cap V\subseteq I$. To this, assume that $x\in IW\cap V$ is homogeneous. Then $x=\sum_{i=1}^na_ix_i$ for some finite homogeneous subsets $\{a_i\}_{i=1}^n$ and $\{x_i\}_{i=1}^n$ of $I$ and $W$, respectively. The ideal of $V$ generated by $\{a_i\}_{i=1}^n$ is principal and is generated by a homogeneous element $a$ of $I$. We therefore have  $x=\sum_{i=1}^na_ix_i\in aW$. Hence $x/a$ is a homogeneous element of $W$ of degree, say, $\alpha$. So that $x/a\in W\cap (R_H)_{\alpha}=V_{\alpha}\subseteq V$. It follows that $x\in aV\subseteq I$.

(3) Assume that $P_0\subsetneq P_1$ are homogeneous prime ideals of $V$ and $Q_1$ is a homogeneous prime ideal of $W$ such that $Q_1\cap V=P_1$. By (2), there exists a homogeneous ideal $Q_0$ of $W$ such that $Q_0\cap V=P_0$. Since $\sqrt{Q_0}\cap V=P_0$, we can assume that $Q_0=\sqrt{Q_0}$ is a prime ideal of $W$. Now we have $Q_0\subseteq Q_1$ or $Q_1\subseteq Q_0$. But if $Q_1\subseteq Q_0$, then $P_1\subseteq P_0$, which is a contradiction. Thus we have $Q_0\subseteq Q_1$.  \end{proof}

\begin{theorem}\label{gr-valuation} Let $R=\bigoplus_{\alpha\in\Gamma}R_{\alpha}$ be a graded domain. Then the following are equivalent:
\begin{itemize}
\item[(1)] $R$ is a $\gGD$ domain;
\item[(2)] $R\subseteq R[u]$ satisfies $\gGD$ for each homogeneous element $u\in R_H$;
\item[(3)] $R\subseteq V$ satisfies $\gGD$ for each gr-valuation overring $V$;
\item[(4)] $R\subseteq T$ satisfies $\gGD$ for each graded domain $T$ containing $R$ as a graded subring.
\end{itemize}
\end{theorem}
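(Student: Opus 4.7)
The plan is to prove the cycle $(1)\Rightarrow(2)\Rightarrow(3)\Rightarrow(4)\Rightarrow(1)$. Two implications are immediate. For $(1)\Rightarrow(2)$: if $u\in R_H$ is homogeneous of degree $\alpha$, the subring $R[u]=\bigoplus_{\beta\in\langle\Gamma\rangle}\bigl(\sum_{n\geq 0}R_{\beta-n\alpha}u^n\bigr)$ is a $\langle\Gamma\rangle$-graded subring of $R_H$, hence a homogeneous overring of $R$, and (1) yields gGD for $R\subseteq R[u]$. For $(4)\Rightarrow(1)$: every homogeneous overring of $R$ is in particular a graded overring containing $R$ as a graded subring, so (4) applies.

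For $(2)\Rightarrow(3)$, I would apply Proposition \ref{d} with $T:=V$. A gr-valuation overring $V$ is gr-quasilocal (its homogeneous nonunits form the unique homogeneous maximal ideal $\fm_V$), and its homogeneous primes are linearly ordered (being contractions of primes of the valuation domain $V_{\fm_V}$), so $V$ is g-treed. Because $V\subseteq R_H$, every homogeneous element of $V$ lies in $R_H$; thus (2) supplies the remaining hypothesis of Proposition \ref{d}, and gGD holds for $R\subseteq V$.

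The substantive step is $(3)\Rightarrow(4)$, where Lemma \ref{v} is essential. Given a graded overring $T$ of $R$, homogeneous primes $P_0\subseteq P$ of $R$, and $Q\in\Spec_h(T)$ with $Q\cap R=P$, I first localize $T$ at the nonzero homogeneous elements of $T$ outside $Q$ to reduce to the case where $T$ is gr-quasilocal with $Q$ its unique homogeneous maximal ideal. Invoking the graded analogue of Chevalley's theorem, I obtain a gr-valuation domain $W$ with $T\subseteq W$ whose homogeneous maximal ideal $M$ satisfies $M\cap T=Q$. Setting $V:=W\cap R_H$, Lemma \ref{v}(1) shows that $V$ is a gr-valuation overring of $R$ whose homogeneous maximal ideal $\fm_V:=M\cap V$ contracts to $P$. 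Applying (3) to $R\subseteq V$ produces a homogeneous prime $\fm_0\subseteq\fm_V$ of $V$ with $\fm_0\cap R=P_0$, and Lemma \ref{v}(3) applied to $V\subseteq W$ lifts this to a homogeneous prime $M_0\subseteq M$ of $W$ with $M_0\cap V=\fm_0$, so $M_0\cap R=P_0$. The contraction $Q_0:=M_0\cap T$ is then a homogeneous prime of $T$ with $Q_0\subseteq Q$ and $Q_0\cap R=P_0$, as required. The main obstacle is the existence of the dominating gr-valuation $W$, which is the graded analogue of Chevalley's theorem and should follow by a Zorn's-lemma argument on the poset of gr-local graded subrings of the homogeneous quotient field of $T$ that dominate $(T,Q)$.
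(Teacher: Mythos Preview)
Your argument is correct and uses the same three ingredients as the paper---Proposition~\ref{d}, Lemma~\ref{v}, and the graded Chevalley-type result (which the paper cites as \cite[Lemma 10]{co2018} rather than reproving via Zorn)---but you thread them through a different cycle. The paper proves $(1)\Rightarrow(2)\Rightarrow(4)\Rightarrow(3)\Rightarrow(1)$: its substantive steps are $(3)\Rightarrow(1)$, where $T\subseteq R_H$ makes the dominating gr-valuation automatically an overring of $R$ (so Lemma~\ref{v} is not needed there), and $(2)\Rightarrow(4)$, where Proposition~\ref{d} and Lemma~\ref{v} are used together. You instead prove $(2)\Rightarrow(3)$ by applying Proposition~\ref{d} directly to a gr-valuation overring $V\subseteq R_H$ (clean, since no intersection trick is required), and then $(3)\Rightarrow(4)$ by using Lemma~\ref{v} to descend from the ambient gr-valuation $W$ to $V=W\cap R_H$ and invoking $(3)$ rather than Proposition~\ref{d}. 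Both organizations are valid; yours isolates the use of Proposition~\ref{d} to the easiest setting, while the paper's isolates the use of Lemma~\ref{v}. One cosmetic point: Proposition~\ref{d} is stated for ``each $u$ in $T$,'' but as you implicitly note, only homogeneous $u$ are ever used (and $R[u]$ is graded only for such $u$), so your appeal to it via (2) is legitimate---the paper relies on the same reading in its own $(2)\Rightarrow(4)$.
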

\begin{proof}
(1)$\Rightarrow$(2) and (4)$\Rightarrow$(3) are trivial.

(3)$\Rightarrow$(1) Let $P_0\subseteq P_1$ be homogeneous prime ideals of $R$ and $Q_1$ be a homogeneous prime ideal of a homogeneous overring $T$ of $R$ such that $Q_1\cap R=P_1$. By \cite[Lemma 4.4]{co2018}, $T$ is contained in a gr-valuation domain $V$ whose homogeneous maximal ideal contracts to $Q_1$. Then there exists a homogeneous prime ideal $N$ of $V$ lying over $P_0$, whence $Q_0:=N\cap T$ is contained in $Q_1$ and lies over $P_0$.

(2)$\Rightarrow$(4) Let $T$ be a graded integral domain containing $R$ as a graded subring, let $P\subseteq M$ be homogeneous prime ideals of $R$, and let $N$ be a homogeneous prime ideal of $T$ such that $N\cap R=M$. By \cite[Lemma 4.4]{co2018} $T$ is contained in a gr-valuation domain $W$ whose homogeneous maximal ideal contracts to $N$. Let $V:=\bigoplus_{\alpha\in\langle\Gamma\rangle}(W\cap(R_H)_{\alpha})$. Then $V$ is a gr-valuation domain. By Lemma \ref{v}, $V\subseteq W$ satisfies $\gGD$. Since $V$ has a unique maximal homogeneous ideal and it is g-treed, Proposition \ref{d} implies that $R\subseteq V$ satisfies $\gGD$, and hence $R\subseteq W$ also satisfies $\gGD$. We thus obtain a  homogeneous prime $Q$ of $W$ contracting to $P$. Hence $Q\cap T$ is contained in $N$ and also contracts to $P$.
\end{proof}

The h-height of a homogeneous prime ideal $P$ (denoted by h-ht$(P)$) is defined to be the supremum of the lengths of chains of homogeneous prime ideals descending from $P$ and the h-dimension of $R$ (denoted by h-$\dim(R)$) is defined to be $\sup\{h$-$\mathrm{ht} (P)\mid P$ is a homogeneous prime ideal of $R\}$. Recall that $R$ is called a \emph{gr-Noetherian domain} if the homogeneous ideals of $R$ satisfy the ascending chain condition. The following result is the graded analogue of \cite[Proposition 7]{d73}.

\begin{proposition} \label{} Let $R=\bigoplus_{\alpha\in\Gamma}R_{\alpha}$ be a gr-Noetherian domain. Then the following conditions are equivalent:
\begin{itemize}
\item[(1)] h-$\dim(R)\leq1$;
\item[(2)] If $T$ is any homogeneous overring of $R$, then $T$ is a $\gGD$ domain;
\item[(3)] $R$ is a $\gGD$ domain.
\end{itemize}
\end{proposition}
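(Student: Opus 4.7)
The plan is to establish the cycle $(2)\Rightarrow(3)\Rightarrow(1)\Rightarrow(2)$. The implication $(2)\Rightarrow(3)$ is trivial, upon taking $T=R$ in~(2).

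For $(3)\Rightarrow(1)$, I would argue by contraposition. Assume $R$ is a gGD domain and that h-$\dim(R)\geq 2$, and choose a homogeneous prime $P$ of $R$ with h-ht$(P)\geq 2$. The preceding Proposition guarantees that $R$ is g-treed. Using the graded analogue of Krull's principal ideal theorem, every minimal (necessarily homogeneous) prime over a nonzero homogeneous principal ideal of $R$ has h-height at most~$1$. If $P$ contained only one h-height-$1$ homogeneous prime $P_1$, then for each nonzero $x\in P\cap H$, the minimal homogeneous prime of $(x)$ lying inside $P$ would be forced to equal $P_1$; hence $P\cap H\subseteq P_1$, and since $P$ is homogeneous this yields $P\subseteq P_1$, contradicting h-ht$(P)\geq 2$. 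Therefore $P$ contains two distinct h-height-$1$ homogeneous primes, which are incomparable yet share the common upper bound $P$, contradicting g-treedness.

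For $(1)\Rightarrow(2)$, the key easy observation is that any graded domain $S$ with h-$\dim(S)\leq 1$ is automatically a gGD domain: for a chain $P_0\subseteq P$ of homogeneous primes of $S$ and a homogeneous prime $Q$ of a homogeneous overring of $S$ lying over $P$, either $P_0=P$ (take $Q_0=Q$) or $P_0\subsetneq P$ forces $P_0=(0)$, in which case $Q_0=(0)$ suffices. It therefore suffices to show that every homogeneous overring $T$ of the gr-Noetherian, h-$\dim\leq 1$ domain $R$ again satisfies h-$\dim(T)\leq 1$. This is the graded analogue of the classical Krull--Akizuki theorem, which I would establish by transferring the standard proof to the homogeneous setting (reducing to the gr-local case via the homogeneous localization $R_{H\setminus P}$, and tracking homogeneous ideals and chains instead of arbitrary ones).

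The two main obstacles are the graded Hauptidealsatz used in $(3)\Rightarrow(1)$ and the graded Krull--Akizuki statement used in $(1)\Rightarrow(2)$. Both are essentially folklore for gr-Noetherian $\Gamma$-graded domains, but each will need either an explicit reference or a short verification in the graded setting; I expect the Krull--Akizuki transfer to be the most technically delicate step, whereas $(3)\Rightarrow(1)$ should go through cleanly once the graded Hauptidealsatz is quoted.
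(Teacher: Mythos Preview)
Your proposal is correct and honest about its two technical debts. The overall cycle $(2)\Rightarrow(3)\Rightarrow(1)\Rightarrow(2)$ matches the paper, and for $(1)\Rightarrow(2)$ you take the same route: reduce to showing that every homogeneous overring $T$ has $\text{h-}\dim(T)\le 1$, then invoke a graded Krull--Akizuki statement. The paper does not prove this itself but cites it from Park, \emph{Integral closure of graded integral domains}, Comm.\ Algebra 35 (2007), Corollary~1.6, so you can simply quote that reference rather than reproving it.

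Where you genuinely diverge from the paper is in $(3)\Rightarrow(1)$. You go through g-treedness: a gGD domain is g-treed (already proved earlier in the paper), and a gr-Noetherian g-treed domain has $\text{h-}\dim\le 1$ by the graded principal ideal theorem, since an h-height~$\ge 2$ homogeneous prime would have to contain two incomparable h-height~$1$ homogeneous primes. The paper instead argues directly: given a saturated homogeneous chain $P_0\subsetneq P_1\subsetneq P_2$, it invokes Chang--Oh (\emph{Discrete valuation overrings of a graded Noetherian domain}) to produce a discrete gr-valuation overring of h-dimension~$1$ whose maximal homogeneous ideal contracts to $P_2$; this overring witnesses the failure of gGD for the chain $P_1\subset P_2$, since a rank-one gr-valuation domain has no intermediate homogeneous prime to contract to $P_1$. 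Your argument is more self-contained within the paper (it reuses the g-treed proposition) but trades the external gr-DVR result for a graded Hauptidealsatz; the paper's argument avoids the principal ideal theorem entirely at the cost of importing structure theory for gr-valuation overrings of gr-Noetherian domains. Either dependency is reasonable, and both are standard in the graded literature, so your route is a legitimate alternative.
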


\begin{proof} $(1)\Rightarrow(2)$: Assume (1). Let $T$ be a homogeneous overring of $R$. Then by \cite[Corollary 1.6]{park}, $T$ is a
gr-Noetherian domain and h-$\dim(T)\leq1$. Hence, $T$ is a $\gGD$ domain.

$(2)\Rightarrow(3)$: This is trivial.

$(3)\Rightarrow(1)$: We will prove the contrapositive. So there exists a saturated chain of homogeneous prime ideals $P_0\subsetneq P_1\subsetneq P_2$ of $R$. By \cite[Lemma 4.4 and Theorem 4.5]{co2018}, there is a gr-valuation overring $V$ of $R$ with a chain $Q_1\subsetneq Q_2$ such that $Q_i\cap R=P_i$, for $i=1,2$ and $V_{H\setminus P_2}$ is a discrete as a gr-valuation ring and that h-$\dim V_{H\setminus P_2}=1$. Then $R\subseteq V_{H\setminus P_2}$ does not satisfy $\gGD$, and so $R$ is not a $\gGD$ domain.
\end{proof}

\section{Graded going-down property is stable under the formation of rings of fractions and factor domains}

This section is devoted to examine the $\gGD$ property of graded domains under localization and factor domains. To this end we need to introduce the notion of graded divided domains and homogeneous unibranched extensions.

We first show that being graded going-down is a local property, see \cite[Lemma 2.1]{d74} and \cite[Proposition 3.8]{ds11}.

\begin{proposition}\label{local}
Let $R=\bigoplus_{\alpha\in\Gamma}R_{\alpha}$ be a graded domain. Then the following conditions are equivalent:
\begin{itemize}
\item[(1)] $R$ is a $\gGD$ domain;
\item[(2)] $R_{H\setminus P}$ is a $\gGD$ domain for all $P\in \Spec_h(R)$;
\item[(3)] $R_{H\setminus M}$ is a $\gGD$ domain for all $M\in \Max_h(R)$.
\end{itemize}
\end{proposition}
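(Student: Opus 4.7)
The proposed proof follows the standard local-property pattern $(1)\Rightarrow(2)\Rightarrow(3)\Rightarrow(1)$, where $(2)\Rightarrow(3)$ is immediate because $\Max_h(R)\subseteq\Spec_h(R)$. The remaining two implications rest on two easy observations that I would record first: for each $P\in\Spec_h(R)$, the inclusions $R\subseteq R_{H\setminus P}\subseteq R_H$ force every homogeneous overring of $R_{H\setminus P}$ to be simultaneously a homogeneous overring of $R$; and contraction $Q\mapsto Q\cap R$ gives a bijection between $\Spec_h(R_{H\setminus P})$ and the set of homogeneous prime ideals of $R$ contained in $P$.

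For $(1)\Rightarrow(2)$, I would fix $P\in\Spec_h(R)$, let $T'$ be a homogeneous overring of $R_{H\setminus P}$, and take homogeneous primes $P_0'\subseteq P_1'$ of $R_{H\setminus P}$ together with $Q_1'\in\Spec_h(T')$ satisfying $Q_1'\cap R_{H\setminus P}=P_1'$. Setting $P_i:=P_i'\cap R$ for $i=0,1$, the extension $R\subseteq T'$ falls within the $\gGD$ hypothesis on $R$ and yields $Q_0\in\Spec_h(T')$ with $Q_0\subseteq Q_1'$ and $Q_0\cap R=P_0$. Because every element of $H\setminus P$ is already a unit in $R_{H\setminus P}\subseteq T'$, the prime $Q_0$ is automatically disjoint from $H\setminus P$, and the bijection above forces $Q_0\cap R_{H\setminus P}=P_0'$, as required.

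For $(3)\Rightarrow(1)$, given a homogeneous overring $T$ of $R$, homogeneous primes $P_0\subseteq P_1$ of $R$, and $Q_1\in\Spec_h(T)$ contracting to $P_1$, I would choose $M\in\Max_h(R)$ with $P_1\subseteq M$. Every homogeneous element of $Q_1$ lies in $Q_1\cap R=P_1\subseteq M$, so $Q_1$ is disjoint from the image of $H\setminus M$ in $T$; hence $Q_1T_{H\setminus M}$ is a homogeneous prime of $T_{H\setminus M}$ contracting to $P_1R_{H\setminus M}$. Since $T\subseteq R_H$ implies that $T_{H\setminus M}$ is a homogeneous overring of $R_{H\setminus M}$, hypothesis (3) supplies a homogeneous prime of $T_{H\setminus M}$ contained in $Q_1T_{H\setminus M}$ and contracting to $P_0R_{H\setminus M}$. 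Its contraction back to $T$ is a homogeneous prime $Q_0\subseteq Q_1$ with $Q_0\cap R=P_0$.

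There is no conceptual obstacle; the only point that requires care is the bookkeeping around the graded-localization correspondence—namely, that homogeneous primes of $T$ disjoint from (the image of) $H\setminus M$ correspond bijectively to homogeneous primes of $T_{H\setminus M}$, and that this bijection is compatible with contraction to $R$ through $R_{H\setminus M}$. Both facts follow from the standard correspondence of primes under localization, applied to the saturated multiplicative set $H\setminus M$ of homogeneous elements and to the homogeneous homomorphism $T\to T_{H\setminus M}$.
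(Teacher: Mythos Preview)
Your proof is correct and takes essentially the same route as the paper's: both argue $(1)\Rightarrow(2)$ by viewing a homogeneous overring of $R_{H\setminus P}$ as a homogeneous overring of $R$ and invoking the $\gGD$ hypothesis on $R$, and both argue $(3)\Rightarrow(1)$ by localizing the given overring $T$ at $H\setminus M$ and pulling the resulting homogeneous prime back to $T$. One minor wording slip: in $(3)\Rightarrow(1)$ you presumably mean that every element of $H$ lying in $Q_1$ falls in $Q_1\cap R=P_1\subseteq M$ (not ``every homogeneous element of $Q_1$'', since $Q_1$ has homogeneous elements outside $R$), but the intended conclusion that $Q_1$ misses $H\setminus M$ is correct.
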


\begin{proof} $(1)\Rightarrow(2)$ Let $P$ be a homogeneous prime ideal of $R$. Note that a homogeneous overring of $R_{H\setminus P}$ is a homogeneous overring of $R$ and $\Spec_h(R_{H\setminus P})=\{QR_{H\setminus P}\mid Q\subseteq P,Q\in\Spec_h(R)\}$. Let $T$ be a homogeneous overring of $R_{H\setminus P}$. Suppose that $P_1R_{H\setminus P}\subseteq P_2R_{H\setminus P}$ are homogeneous prime ideals of $R_{H\setminus P}$ and $Q_2$ is a homogeneous prime ideal of $T$ such that $Q_2\cap R_{H\setminus P}=P_2R_{H\setminus P}$. Note that $Q_2\cap R = P_2$. So there is a homogeneous prime ideal $Q_1$ of $T$ such that $Q_1\subseteq Q_2$ and $Q_1\cap R = P_1$. Hence $Q_1\cap R_{H\setminus P}=P_1R_{H\setminus P}$.

$(2)\Rightarrow(3)$ Trivial.

$(3)\Rightarrow(1)$ Let $T$ be a homogeneous overring of $R$. Suppose that $P_1\subseteq P_2$ are homogeneous prime ideals of $R$ and $Q_2$ is a homogeneous prime ideal of $T$ such that $Q_2\cap R=P_2$. We must find a homogeneous prime ideal $Q_1$ of $T$ such that both $Q_1\subseteq Q_2$ and $Q_1\cap R=P_1$. Choose a maximal homogeneous ideal $M$ of $R$ which contains $P_2$. It is enough to find a homogeneous prime ideal $Q_1$ of $T$ such that $Q_1\subseteq Q_2$ and $Q_1\cap R_{H\setminus M}=P_1R_{H\setminus M}$. This can be done thanks to (3), as the ring extension $R_{H\setminus M} \subseteq T_{H\setminus M}$ satisfies $\gGD$.
\end{proof}

\begin{remark}\label{l}
By the same argument as in part $(1)\Rightarrow(2)$ of the above proof, one can show that if $R$ is a $\gGD$ domain and $S\subseteq H$ is a multiplicatively closed subset, then $R_S$ is a $\gGD$ domain. Note that a homogeneous overring of $R_S$ is a homogeneous overring of $R$ and $\Spec_h(R_S)=\{QR_S\mid Q\cap S=\emptyset,Q\in\Spec_h(R)\}$.
\end{remark}

Now we study the $\gGD$ property under the formation of factor domains. For this we need the notion of graded divided integral domains.
Let $R=\bigoplus_{\alpha\in\Gamma}R_{\alpha}$ be a graded integral domain. A homogeneous prime ideal $P$ of $R$ is called a \emph{homogeneous divided prime ideal} if each element of $H\setminus P$ divides each  homogeneous element of $P$. It can be seen easily that $P$ is a homogeneous divided prime ideal if and only if $P$ is comparable to each homogeneous ideal of $R$ if and only if $P=PR_{H\setminus P}$. The graded integral domain $R$ is called a \emph{graded divided integral domain} if each homogeneous prime ideal of $R$ is a homogeneous divided prime ideal. This is the graded analogue of divided rings introduced by Akiba \cite{a67} as AV-domains, and then studied further by McAdam and Dobbs \cite{mc76, d76}. Our first result in this case is the graded analogue of \cite[Proposition 2.1]{d76}.

\begin{lemma}\label{2.1}
Any graded divided integral domain has a unique maximal homogeneous ideal and is a $\gGD$ domain.
\end{lemma}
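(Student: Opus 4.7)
The plan is to prove the two assertions separately. For uniqueness of the maximal homogeneous ideal, let $M_1,M_2$ be two maximal homogeneous ideals of $R$. Being graded divided means every homogeneous prime of $R$, and in particular $M_1$, is comparable to every homogeneous ideal of $R$. Applied to $M_2$, this forces $M_1\subseteq M_2$ or $M_2\subseteq M_1$, and maximality then gives $M_1=M_2$.

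For the $\gGD$ property, fix a homogeneous overring $T$ of $R$, homogeneous primes $P_0\subseteq P_1$ of $R$, and a homogeneous prime $Q_1$ of $T$ with $Q_1\cap R=P_1$. Since $P_0T\subseteq P_1T\subseteq Q_1$, the family of prime ideals of $T$ containing $P_0T$ and contained in $Q_1$ is nonempty (it contains $Q_1$) and closed under intersections of descending chains, so Zorn's lemma produces a minimal element $Q_0$; equivalently, $Q_0T_{Q_0}$ is a minimal prime of $P_0T_{Q_0}$. Because $P_0T$ is a homogeneous ideal, the observation from the introduction that minimal primes of homogeneous ideals are homogeneous shows $Q_0\in\Spec_h(T)$, and by construction $Q_0\subseteq Q_1$ with $P_0\subseteq Q_0\cap R$.

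The crux is to promote the inclusion $P_0\subseteq Q_0\cap R$ to an equality. Suppose, for contradiction, that there is a homogeneous $a\in(Q_0\cap R)\setminus P_0$. The graded divided hypothesis says $P_0$ is comparable to the homogeneous ideal $aR$; since $a\notin P_0$ we must have $P_0\subseteq aR$, and primality of $P_0$ combined with $a\notin P_0$ upgrades this to $P_0=aP_0$. Since $Q_0T_{Q_0}$ is a minimal prime of $P_0T_{Q_0}$, it equals $\sqrt{P_0T_{Q_0}}$, so there exist $n\geq 1$ and $s\in T\setminus Q_0$ with $a^n s\in P_0T$.

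The main technical step is then to iteratively peel off factors of $a$: writing $a^n s=\sum p_i t_i$ with $p_i\in P_0$ and $t_i\in T$, the relation $P_0=aP_0$ lets us write $p_i=a r_i$ with $r_i\in P_0$, so cancelling one $a$ in the domain $T$ yields $a^{n-1}s\in P_0T$. Iterating this reduction $n$ times produces $s\in P_0T\subseteq Q_0$, contradicting $s\notin Q_0$ and completing the proof. The potential obstacle is ensuring the cancellation really iterates, but this is fine because the identity $P_0=aP_0$ is intrinsic to $P_0$ and $a$ and can be reused at each stage.
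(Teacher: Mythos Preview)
Your proof is correct and follows essentially the same route as the paper's: take a (necessarily homogeneous) prime $Q_0\subseteq Q_1$ minimal over $P_0T$ and then use the divided hypothesis to force $Q_0\cap R=P_0$. The only cosmetic difference is that the paper quotes Kaplansky's Exercise~37 to obtain an equation $rt=\sum p_it_i$ with homogeneous $r\in R\setminus P_0$, $t\in T\setminus Q_0$ and divides once by $r$ (using $r^{-1}p_i\in P_0$), whereas you extract $a^{n}s\in P_0T$ from $Q_0T_{Q_0}=\sqrt{P_0T_{Q_0}}$ and iterate the identity $P_0=aP_0$ to peel off the $n$ factors of $a$.
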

\begin{proof} Assume that $R=\bigoplus_{\alpha\in\Gamma}R_{\alpha}$ is a graded divided integral domain. Since every homogeneous prime ideal is comparable to each homogeneous ideal, $R$ has a unique maximal homogeneous ideal. Now assume that $R$ is not a $\gGD$. Then there exists a homogeneous overring $T$ of $R$ such that the extension $R\subseteq T$ does not satisfy $\gGD$. Hence there are homogeneous prime ideals $P\subseteq P_1$ of $R$ and a homogeneous prime ideal $Q_1$ in $T$ such that $Q_1\cap R=P_1$ and no $Q\in\Spec_h(T)$ satisfies both $Q\subseteq Q_1$ and $Q\cap R=P$. By \cite[Exercise 37, page 44, (iii)$\Rightarrow$ (i)]{k74}, for a prime ideal $Q$ of $T$ minimal over $PT$, we have $PT\cap(R\setminus P)(T\setminus Q)\neq\emptyset$. Then $\Sigma p_it_i=rt$, for some  $r\in R\setminus P$ and $t\in T\setminus Q$ and elements $p_i$ in $P$ and $t_i$ in $T$. Let $r=r_{{\alpha}_1}+\cdots+r_{{\alpha}_n}$ be the decomposition of $r$ into homogeneous components of degrees ${\alpha}_1,\ldots,{\alpha}_n$ with ${\alpha}_1<\cdots<{\alpha}_n$. Since $\Sigma p_it_i-\Sigma_{r_{{\alpha}_j}\in P}r_{{\alpha}_j}t=\Sigma_{r_{{\alpha}_j}\notin P}r_{{\alpha}_j}t$, we can assume that $r_{{\alpha}_j}\in H\setminus P$ for $j=1,\ldots,n$. By decomposing $p_i$ and $t_i$ into homogeneous elements, we can assume that $p_i$ and $t_i$ are homogeneous elements. Now let $t=t_{{\beta}_1}+\cdots+t_{{\beta}_s}+\cdots+t_{{\beta}_m}$ be the decomposition of $t$ into homogeneous components of degrees ${\beta}_1,\ldots,{\beta}_m$ with ${\beta}_1<\cdots<{\beta}_m$, such that ${\beta}_s$ is the smallest degree such that $t_{{\beta}_s}\in T\setminus Q$. The right hand side of the equality
\begin{equation}\label{(0)}
    \Sigma p_it_i=(r_{{\alpha}_1}+\cdots+r_{{\alpha}_n})(t_{{\beta}_1}+\cdots+t_{{\beta}_s}+\cdots+t_{{\beta}_m})
\end{equation}
is the sum of the entries of the following array:
$$
\begin{array}{ccccccc}
  r_{{\alpha}_1}t_{{\beta}_1} & r_{{\alpha}_1}t_{{\beta}_2} & r_{{\alpha}_1}t_{{\beta}_3} & \cdots & r_{{\alpha}_1}t_{{\beta}_s} & \cdots & r_{{\alpha}_1}t_{{\beta}_m} \\
  r_{{\alpha}_2}t_{{\beta}_1} & r_{{\alpha}_2}t_{{\beta}_2} & r_{{\alpha}_2}t_{{\beta}_3} & \cdots & r_{{\alpha}_2}t_{{\beta}_s} & \cdots & r_{{\alpha}_2}t_{{\beta}_m} \\
  r_{{\alpha}_3}t_{{\beta}_1} & r_{{\alpha}_3}t_{{\beta}_2} & r_{{\alpha}_3}t_{{\beta}_3} & \cdots & r_{{\alpha}_3}t_{{\beta}_s} & \cdots & r_{{\alpha}_3}t_{{\beta}_m} \\
  \vdots & \vdots & \vdots &  & \vdots &  & \vdots \\
  r_{{\alpha}_n}t_{{\beta}_1} & r_{{\alpha}_n}t_{{\beta}_2} & r_{{\alpha}_n}t_{{\beta}_3} & \cdots & r_{{\alpha}_n}t_{{\beta}_s} & \cdots & r_{{\alpha}_n}t_{{\beta}_m}.
\end{array}
$$
The component $r_{{\alpha}_1}t_{{\beta}_1}$ has the smallest degree $\alpha_1+\beta_1$ in both sides of the equality (\ref{(0)}).
By canceling   $\Sigma_{\deg(p_it_i)=\alpha_1+\beta_1} p_it_i=r_{{\alpha}_1}t_{{\beta}_1}$ from both sides of the equality (\ref{(0)}), we obtain the equality
\begin{equation}\label{(1)}
    \Sigma p_it_i=\Sigma_{j+k\geq3}r_{{\alpha}_j}t_{{\beta}_k}.
\end{equation}
Now the component of smallest degree in the right side of the equation (\ref{(1)}) is $r_{{\alpha}_1}t_{{\beta}_2}$ or $r_{{\alpha}_2}t_{{\beta}_1}$ or $r_{{\alpha}_1}t_{{\beta}_2}+r_{{\alpha}_2}t_{{\beta}_1}$. In any case we can cancel these components from both sides of the equation (\ref{(1)}) to obtain the equation
\begin{equation*}\label{(2)}
   \Sigma p_it_i=\Sigma_{j+k\geq4}r_{{\alpha}_j}t_{{\beta}_k}.
\end{equation*}
Continuing this procedure we obtain an equality like
\begin{equation}\label{(3)}
    \Sigma p_it_i=\Sigma_{j+k\geq s+1}r_{{\alpha}_j}t_{{\beta}_k}.
\end{equation}
Set $X:=\{r_{{\alpha}_j}t_{{\beta}_k}\mid \alpha_j+\beta_k=\alpha_1+\beta_s$ and $k\neq s\}$. Note that if $r_{{\alpha}_j}t_{{\beta}_k}\in X$, then $j\geq2$ and $k\leq s-1$. This means that $X\subseteq Q$. The component of smallest degree in the right side of the equation (\ref{(3)}) is one of $r_{{\alpha}_j}t_{{\beta}_k}$ with $\alpha_j+\beta_k=\alpha_1+\beta_s$ or a sum of some of these elements. If this component of smallest degree is of the form $\Sigma_{x\in A}x$ for some $A\subseteq X$, then we can cancel this component from both sides of the equation (\ref{(3)}). After that if $\Sigma_{x\in B}x$ for some $B\subseteq X$ has the smallest degree, delete this component too and so on. Hence we can assume that
\begin{equation}\label{(4)}
    r_{{\alpha}_1}t_{{\beta}_s}+\Sigma_{x\in C}x=\Sigma p_it_i
\end{equation}
has the smallest degree for some $C\subseteq X$. Since $PT\subseteq Q$, we have $r_{{\alpha}_1}t_{{\beta}_s}\in Q$. But $t_{{\beta}_s}\notin Q$, and so that $r_{{\alpha}_1}\in Q$. Since $Q$ is a minimal prime ideal of $PT$, using \cite[Theorem 2.1]{Huck} there exists an element $y\in T\setminus Q$, and a nonnegative integer $n$ such that $r_{{\alpha}_1}^ny\in PT$. Note that we can assume that $y$ is a homogeneous element. Indeed let $y=y_{{\gamma}_1}+\cdots+y_{{\gamma}_p}+\cdots+y_{{\gamma}_{\ell}}$ be the decomposition of $y$ into homogeneous elements and assume that $y_{{\gamma}_p}\notin Q$. Thus $r_{{\alpha}_1}^ny_{{\gamma}_1}+\cdots+r_{{\alpha}_1}^ny_{{\gamma}_p}+\cdots+r_{{\alpha}_1}^ny_{{\gamma}_{\ell}}=r_{{\alpha}_1}^ny\in PT$, and so that $r_{{\alpha}_1}^ny_{{\gamma}_p}\in PT$. Hence we can assume that $y$ is homogeneous. Thus $t_{{\beta}_s}y\in T\setminus Q$ is a homogeneous element. Multiplying both sides of (\ref{(4)}) by $r_{{\alpha}_1}^{n}y$, we obtain
$$r_{{\alpha}_1}^{n+1}(t_{{\beta}_s}y)+r_{{\alpha}_1}^ny(\Sigma_{x\in C}x)=\Sigma r_{{\alpha}_1}^np_it_iy.$$ This means that $r_{{\alpha}_1}^{n+1}(t_{{\beta}_s}y)\in PT$. Whence $r_{{\alpha}_1}^{n+1}(t_{{\beta}_s}y)=\Sigma p_it_i$ for some homogeneous elements $p_i\in P$ and $t_i\in T$. Now since $R$ is graded divided, $r_i:=p_i(r_{{\alpha}_1}^{n+1})^{-1}$ is in $P$ and so $t_{{\beta}_s}y=\Sigma r_it_i\in PT\subseteq Q$, which is a contradiction.
\end{proof}

We say that an extension of graded domains $A\subseteq B$ is \emph{homogeneous unibranched} (resp. \emph{satisfies graded lying over} ($\gLO$)) in case the contraction map $\Spec_h(B)\to\Spec_h(A)$ is bijection (resp. surjection). The non-graded version of the next remark is stated in \cite[Lemma 1]{mc72a}.

\begin{remark}\label{mcadam}
(1) If $R \subseteq S\subseteq T$ are extensions of graded domains and $\gGD$ holds in $R\subseteq S$ and $S\subseteq T$, then it holds in $R\subseteq T$.

(2) If $\gGD$ holds in $R\subseteq T$ and $\gLO$ holds in $S\subseteq T$, then $\gGD$ holds in $R\subseteq S$.
\end{remark}

The next two lemmas and theorem are the graded version of \cite[Lemmas 2.3, 2.4 and Theorem 2.5]{d76} respectively.

\begin{lemma}\label{2.3}
Let $R=\bigoplus_{\alpha\in\Gamma}R_{\alpha}$ be a graded subring of $T=\bigoplus_{\beta\in\Lambda}T_{\beta}$ such that $R\subseteq T$ is an integral and homogeneous unibranched extension of graded domains. Then $R$ is a $\gGD$ if and only if $T$ is a $\gGD$.
\end{lemma}
\begin{proof} Since $R\subseteq T$ is integral, it satisfies lying over and going-up properties by \cite[Theorems 5.10 and 5.11]{AM}. Let $P_1\subseteq P_2$ be homogeneous prime ideals of $R$ and $Q_2$ is a homogeneous prime ideal of $T$ such that $Q_2\cap R=P_2$. By lying over, one can choose a prime ideal $Q_1$ of $T$ such that $Q_1\cap R=P_1$. Note that we have $P_1=P_1^*=(Q_1\cap R)^*=Q_1^*\cap R$ by \cite[Lemma 2.7]{hs18}; so that we can assume that $Q_1$ is homogeneous. Hence, by going-up one can choose a prime ideal $Q_2'$ containing $Q_1$ such that $Q_2'\cap R=P_2$. Again, using \cite[Lemma 2.7]{hs18}, we can assume that $Q_2'$ is homogeneous. Now $Q_2=Q_2'$, since the extension is unibranched. Thus $R\subseteq T$ satisfies $\gGD$.

Now assume that $T$ is a $\gGD$ domain. We show that $R\subseteq V$ satisfies $\gGD$ for each gr-valuation overring $V$ of $R$. Let $V$ be a gr-valuation overring of $R$. Observe that $VT$ is a homogeneous overring of $T$ graded by $(VT)_{\gamma}=\sum_{\gamma=\alpha+\beta}V_{\alpha}T_{\beta}$ for $\gamma\in\langle\Lambda\rangle$. Then $T\subseteq VT$ satisfies $\gGD$, since $T$ is a $\gGD$ domain, so that $R\subseteq VT$ satisfies $\gGD$. However, $V\subseteq VT$ satisfies lying over since $VT$ is integral over $V$. An application of Remark \ref{mcadam}(2) implies that $R\subseteq V$ satisfies $\gGD$, as required. Thus $R$ is a $\gGD$ domain by Theorem \ref{gr-valuation}.

Conversely, assume that $R$ is a $\gGD$ domain. It suffices to show that $T\subseteq V$ satisfies $\gGD$ for each gr-valuation overring $V$ of $T$. Let $V$ be a gr-valuation overring of $T$, and $P_1\subseteq P_2$ be homogeneous primes of $T$ and $Q_2$ a homogeneous prime of $V$ contracting to $P_2$. By the hypotheses $R$ is a $\gGD$ domain, and hence $R\subseteq V$ satisfies $\gGD$ property by Theorem \ref{gr-valuation} part $(1)\Leftrightarrow(4)$. Thus one can choose a homogeneous prime  ideal $Q_1$ of $V$ such that $Q_1 \subseteq Q_2$ and $Q_1\cap R=P_1\cap R$. Whence $Q_1\cap T$ and $P_1$ have the same contraction in $R$, and so are equal.
\end{proof}

Let $R=\bigoplus_{\alpha\in\Gamma}R_{\alpha}$ be a graded domain and $P$ be a homogeneous prime ideal of $R$. Then $T=R+PR_{H\setminus P}$ is a $\Gamma$-graded integral domain (see \cite[Page 629]{cs18}).

\begin{lemma}\label{2.4}
Let $R=\bigoplus_{\alpha\in\Gamma}R_{\alpha}$ be a graded domain with unique homogeneous maximal ideal and $\gGD$, let $P$ be a homogeneous prime ideal of $R$ and set $T=R+PR_{H\setminus P}$. Then
\begin{enumerate}
  \item $R\subseteq T$ is integral and homogeneous unibranched.
  \item Let $Q$ be a homogeneous divided prime ideal in $R$ and $Q\subseteq P$. Then $Q$ is a prime of $T$ which contracts to $Q$ and $Q$ is homogeneous divided in $T$.
  \item $PR_{H\setminus P}$ is homogeneous divided in $T$.
\end{enumerate}
\end{lemma}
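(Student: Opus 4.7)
The plan is to adapt Dobbs' non-graded argument \cite[Lemma 2.4]{d76} by exploiting the graded pullback structure of $T$. First note that $R\cap PR_{H\setminus P}=P$: if $r\in R$ equals $p/s$ with $p\in P$ and $s\in H\setminus P$, then $sr=p\in P$ forces $r\in P$. Hence $T/PR_{H\setminus P}\cong R/P$ via the natural map; moreover, $PR_{H\setminus P}$ is the unique maximal homogeneous ideal of $R_{H\setminus P}$, so every $s\in H\setminus P$ is a unit there.

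For part (1), homogeneous unibranchedness reads off this structure: homogeneous primes of $T$ containing $PR_{H\setminus P}$ correspond via $T/PR_{H\setminus P}\cong R/P$ to homogeneous primes of $R$ containing $P$, while any other homogeneous prime of $T$ must avoid the units $H\setminus P$, hence lives inside $PR_{H\setminus P}$, is inherited from $\Spec_h(R_{H\setminus P})$, and corresponds to a homogeneous prime of $R$ contained in $P$. Both correspondences are implemented by contraction. For integrality, it suffices to show each homogeneous $x=p/s\in PR_{H\setminus P}$ is integral over $R$; mirroring the valuation argument of \cite[Lemma 2.4]{d76}, one tests $x$ against an arbitrary gr-valuation overring $V$ of $R$ (enough by \cite[Lemma 10]{co2018}): letting $N=M_V\cap R$ and using that $R\subseteq V$ satisfies $\gGD$ together with the g-treed property, a case analysis on the position of $N$ relative to $P$ yields $v(p)\geq v(s)$ in each configuration, so $x\in V$. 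This valuation argument is the main technical obstacle.

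For part (2), since $Q$ is homogeneous divided in $R$ and $Q\subseteq P$ forces $H\setminus P\subseteq H\setminus Q$, any homogeneous $q\in Q$ and $s\in H\setminus P$ satisfies $q/s\in QR_{H\setminus Q}=Q$; in particular $QR_{H\setminus P}=Q$. Hence for a generator $p/s$ of $PR_{H\setminus P}$, $q\cdot(p/s)=p\cdot(q/s)\in PQ\subseteq Q$, showing $Q$ is an ideal of $T$. Since $Q=QR_{H\setminus P}\cap T$ is the contraction of a homogeneous prime from $R_{H\setminus P}$, it is prime in $T$ with $Q\cap R=Q$. To verify $Q$ is homogeneous divided in $T$, take a homogeneous $t\in T\setminus Q$. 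If $t\notin PR_{H\setminus P}$, then $t$ is a unit in $R_{H\setminus P}$ and for $q\in Q$, $q/t\in QR_{H\setminus P}\cap T=Q$. If $t\in PR_{H\setminus P}\setminus Q$, write $t=p'/s'$ with $p',s'$ homogeneous in $R$, $s'\in H\setminus P$, and $p'\notin Q$ (else $t\in QR_{H\setminus P}=Q$); the divided property of $Q$ in $R$ gives $q/p'\in Q$, so $q/t=s'(q/p')\in s'Q\subseteq Q$.

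For part (3), take a homogeneous $t\in T\setminus PR_{H\setminus P}$ of degree $\alpha$ and decompose $t=r+y$ with $r\in R_\alpha$ and $y\in PR_{H\setminus P}$ of degree $\alpha$; since $y\in PR_{H\setminus P}$ and $t\notin PR_{H\setminus P}$, we have $r\notin P$, i.e., $r\in H\setminus P$. Then $r$ is a unit in $R_{H\setminus P}$, $y/r$ lies in the maximal homogeneous ideal $PR_{H\setminus P}$, and $1+y/r$ is a unit in $R_{H\setminus P}$ with inverse of the form $1+w$ for some $w\in PR_{H\setminus P}$. Hence $t^{-1}=r^{-1}(1+w)\in R_{H\setminus P}$, and for any homogeneous $z\in PR_{H\setminus P}$, $z/t=z\cdot t^{-1}\in PR_{H\setminus P}\cdot R_{H\setminus P}=PR_{H\setminus P}\subseteq T$, establishing divided-ness.
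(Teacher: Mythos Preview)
Your proof is correct, but the route for the integrality assertion in part (1) is genuinely different from the paper's. The paper, following Dobbs' original argument, takes a homogeneous $v=a/b\in PR_{H\setminus P}$ and works in $R[v^{-1}]$: if $P$ survived there, $\gGD$ would supply a homogeneous prime $W$ of $R[v^{-1}]$ lying over $P$, forcing $b=av^{-1}\in W\cap R=P$, a contradiction; hence $1\in PR[v^{-1}]$, which unwinds to an explicit monic equation for $v$ over $R$ (using that $1-p_0$ is a unit since $p_0$ lies in the unique homogeneous maximal ideal and has degree $0$). Your approach instead invokes the valuative criterion for integral closure in $R_H$: a homogeneous element lies in the integral closure iff it lies in every gr-valuation overring, and you verify $p/s\in V$ by the $N$-versus-$P$ case split (in the case $P\subsetneq N$ one needs $\gGD$ to produce a homogeneous prime of $V$ lying over $P$, then compares $sV$ and that prime inside the totally ordered lattice of homogeneous ideals of $V$). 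Both arguments hinge on $\gGD$, but the paper's yields an integrality equation directly without the auxiliary fact that $\bigcap V$ equals the graded integral closure; your approach is cleaner conceptually but leans on that fact, which is what your citation of \cite[Lemma 10]{co2018} is really buying (via the standard ``if $x\notin R[x^{-1}]$ then some gr-valuation overring misses $x$'' maneuver).

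For the unibranched part your pullback/localization description (using $T/PR_{H\setminus P}\cong R/P$ and $T_{H\setminus P}=R_{H\setminus P}$) is a slightly more structural packaging of the paper's explicit case analysis ($Q\subsetneq P$ versus $P\subseteq Q$, with uniqueness coming from $(PR_{H\setminus P})^2\subseteq PT$ and incomparability). One small point worth making explicit: when you say a prime not containing $PR_{H\setminus P}$ ``must avoid the units $H\setminus P$,'' the justification is that if $s\in H\setminus P$ lay in such a prime, then $p/s''=s\cdot p/(ss'')\in sT$ would force all of $PR_{H\setminus P}$ into it. Your treatments of (2) and (3) are more explicit than the paper's (which defers to \cite[Lemma 2.4]{d76} and calls (3) a ``straightforward computation'') but follow the same line.
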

\begin{proof} (1) For the integrality assertion, it is enough to prove that any nonzero homogeneous $v$ in $PR_{H\setminus P}$ is integral over $R$. Express $v$ as $a/b$ with $a\in P\cap H$ and $b\in H\setminus P$. If $P$ survives in $R[v^{-1}]$, the fact that $R\subseteq R[v^{-1}]$ satisfies $\gGD$, supplies a homogeneous prime $L$ of $R[v^{-1}]$ which contracts to $P$; as $b = av^{-1}$ is then in $PR[v^{-1}]$ it follows that $b$ is in $L\cap R = P$, a contradiction. Thus, 1 is in $PR[v^{-1}]$ that is, $1=p_0+p_1v^{-1}+\cdots+p_nv^{-n}$ for some homogeneous elements $p_i\in P$. Multiplication by $v^n(1-p_0)^{-1}$ produces the desired integrality equation of $v$ over $R$. Note that $1-p_0$ is invertible in $R$.

For the second assertion, let $Q$ be a homogeneous prime ideal of $R$. Since $R$ has a unique homogeneous maximal ideal and is g-treed, either $Q\subsetneq P$ or $P\subseteq Q$. If $Q\subsetneq P$, $QR_{H\setminus P}$  is a homogeneous prime ideal of $T$ contracting to $Q$. Let $I$ be a homogeneous prime ideal of $T$ such that $I\cap R=Q$. Hence $QT=(I\cap R)T\subseteq I$. This shows that $(QR_{H\setminus P})(PR_{H\setminus P})\subseteq I$. It follows from $PR_{H\setminus P}\nsubseteq I$ that $QR_{H\setminus P}\subseteq I$. Now by incomparability $I=QR_{H\setminus P}$. For the case $P\subseteq Q$, note similarly that any prime of $T$ which contracts to $Q$ must contain $(PR_{H\setminus P})^2$ and, hence, coincides with $Q + PR_{H\setminus P}$.

(2) In (1) we saw that $QR_{H\setminus P}$  is a homogeneous prime ideal of $T$ contracting to $Q$. As $Q$ is homogeneous divided in $R$, $Q=QR_{H\setminus P}$ is a prime ideal of $T$. Now it can be seen that $QT_{H'\setminus Q}\subseteq QR_{H\setminus Q}=Q$, where $H'$ is the set of nonzero homogeneous elements of $T$. Hence $Q$ is homogeneous divided in $T$ (see \cite[Lemma 2.4]{d76}).

(3) It is a straightforward computation.
\end{proof}

We are now ready to prove the main result of this section which provides a characterization of $\gGD$ domains in terms of graded divided integral unibranched homogeneous overrings. This leads us to show that the $\gGD$ property goes to graded factor domains.

\begin{theorem}\label{2.5}
Let $R=\bigoplus_{\alpha\in\Gamma}R_{\alpha}$ be a graded domain with a unique homogeneous maximal ideal. The following are equivalent:
\begin{enumerate}
  \item $R$ is a $\gGD$ domain.
  \item $R$ has a graded divided integral unibranched homogeneous overring.
  \item $R$ has a graded divided integral unibranched homogeneous extension.
\end{enumerate}
\end{theorem}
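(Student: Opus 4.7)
\textit{Plan.} I will prove the cycle $(1)\Rightarrow(2)\Rightarrow(3)\Rightarrow(1)$. The implication $(2)\Rightarrow(3)$ is immediate since every homogeneous overring is a homogeneous extension. For $(3)\Rightarrow(1)$, given a graded divided integral unibranched homogeneous extension $R\subseteq T$, Lemma \ref{2.1} shows that $T$ is itself a $\gGD$ domain, and Lemma \ref{2.3} (which transfers $\gGD$ across integral unibranched extensions in both directions) then yields that $R$ is $\gGD$.

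The heart of the proof is $(1)\Rightarrow(2)$. I first note that the hypotheses force $\Spec_h(R)$ to be linearly ordered: the $\gGD$ hypothesis makes $R$ g-treed, so the unique homogeneous maximal ideal cannot contain two incomparable homogeneous primes, and since every homogeneous prime of $R$ lies beneath it, any two are comparable. With this chain structure in hand, I construct $T$ by transfinite induction driven by Lemma \ref{2.4}. Well-order the homogeneous primes of $R$ compatibly with inclusion (smaller primes first), set $T_0=R$, and at successor stage $\alpha+1$ let $\tilde P_\alpha$ be the unique homogeneous prime of $T_\alpha$ contracting to $P_\alpha$ (unique by the inductive hypothesis that $R\subseteq T_\alpha$ is integral and homogeneous unibranched), and put
\[
T_{\alpha+1} := T_\alpha + \tilde P_\alpha\,(T_\alpha)_{H_\alpha\setminus\tilde P_\alpha},
\]
where $H_\alpha$ denotes the nonzero homogeneous elements of $T_\alpha$. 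At limit ordinals take directed unions.

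By Lemma \ref{2.4}(1) each successor step is integral and homogeneous unibranched, and these properties pass to compositions and directed unions, so $R\subseteq T_\alpha$ remains integral and homogeneous unibranched at every stage. Lemma \ref{2.3} keeps $T_\alpha$ a $\gGD$ domain, and the homogeneous-unibranched bijection on spectra combined with the fact that integral extensions carry maximal ideals to maximal ideals forces $T_\alpha$ to retain a unique homogeneous maximal ideal; thus Lemma \ref{2.4} is applicable at every stage. Lemma \ref{2.4}(3) makes $\tilde P_\alpha$ homogeneous divided in $T_{\alpha+1}$, and since smaller primes were processed first, at every later stage the already-divided prime sits below the one being processed, so Lemma \ref{2.4}(2) preserves the division; the divided property also passes to directed unions. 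Therefore $T:=\bigcup_\alpha T_\alpha$ is a graded divided integral unibranched homogeneous overring of $R$, as required.

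The main obstacle is arranging the transfinite bookkeeping cleanly, since the chain $\Spec_h(R)$ need not have ordinal order type. A Zorn's lemma reformulation sidesteps this technicality: consider pairs $(S,\mathcal D)$ where $S$ is a graded integral unibranched homogeneous overring of $R$ with unique homogeneous maximal ideal and $\mathcal D$ is a downward-closed collection of homogeneous divided primes in $S$, ordered in the evident way; every chain has an upper bound by union, and a maximal element must satisfy $\mathcal D=\Spec_h(S)$, since otherwise applying Lemma \ref{2.4} to the smallest element of $\Spec_h(S)\setminus\mathcal D$ would produce a strictly larger pair, contradicting maximality. Verifying that Lemma \ref{2.4}(2) actually preserves every prime already in $\mathcal D$ is exactly where the linearly-ordered structure of the homogeneous spectrum is used.
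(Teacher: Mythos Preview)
Your treatment of $(2)\Rightarrow(3)$ and $(3)\Rightarrow(1)$ matches the paper's. For $(1)\Rightarrow(2)$, however, your Zorn's-lemma argument has a gap: you assert that if $(S,\mathcal D)$ is maximal with $\mathcal D\neq\Spec_h(S)$, then $\Spec_h(S)\setminus\mathcal D$ has a smallest element. Since $\Spec_h(S)$ is linearly ordered but need not be well-ordered under inclusion (a $\gGD$ domain can certainly have infinite descending chains of homogeneous primes), the upward-closed set $\Spec_h(S)\setminus\mathcal D$ may have no minimum, and your step needs one precisely to keep the enlarged $\mathcal D$ downward closed. Your transfinite construction has the same defect, as you yourself note. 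The gap is repairable: if every prime outside $\mathcal D$ were already divided in $S$, then $(S,\Spec_h(S))$ would be a strictly larger valid pair; otherwise pick any non-divided $P\notin\mathcal D$, observe that downward closure forces $\mathcal D\subseteq\{Q:Q\subsetneq P\}$, and apply Lemma~\ref{2.4} to strictly enlarge $S$ while keeping the same $\mathcal D$.

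The paper's argument is shorter and avoids all of this bookkeeping. It applies Zorn's lemma directly to the poset of integral homogeneous-unibranched overrings of $R$, with no auxiliary set $\mathcal D$ carried along. For a maximal such overring $T$, if some homogeneous prime $Q$ of $T$ failed to be divided, then $T+QT_{H'\setminus Q}$ would be a strictly larger element of the poset by Lemma~\ref{2.4}(1) alone, contradicting maximality; hence $T$ is graded divided. Thus only part~(1) of Lemma~\ref{2.4} is used, whereas your construction leans on parts~(2) and~(3) to propagate dividedness through the stages.
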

\begin{proof} (1)$\Rightarrow$(2) Let $X$ be the set of homogeneous unibranched integral overrings of $R$. Then $X\neq\emptyset$. For $T_1, T_2\in X$, define $T_1\leq T_2$, whenever $T_1$ is a graded subring of $T_2$. Hence $(X,\leq)$ is a partially ordered set. The union of any chain in $X$ is integral over $R$ and homogeneous unibranched. Indeed, assume that $\{T_i\}$ is a chain in $X$ and set $T:=\bigcup_i T_i$. Then $T=\bigoplus_{\alpha\in\langle\Gamma\rangle}T_{\alpha}$, where $T_{\alpha}=\bigcup(T_i)_{\alpha}$ for $\alpha\in\langle\Gamma\rangle$. Let $P$ be a homogeneous prime ideal of $R$. Then there exists a homogeneous prime ideal $Q_i$ of $T_i$ such that $Q_i\cap R=P$, for each $i$. Assume that $T_i\subseteq T_j$ for some $i$ and $j$. Hence $(Q_j\cap T_i)\cap R=P=Q_i\cap R$ implies that $Q_i=Q_j\cap T_i\subseteq Q_j$. This shows that $Q:=\cup_iQ_i$ is a homogeneous prime ideal of $T$ and $Q\cap R=P$. Assume now that $Q,Q'$ are homogeneous prime ideal of $T$ such that $Q\cap R=Q'\cap R$. Thus for each $i$, $(Q\cap T_i)\cap R=(Q'\cap T_i)\cap R$, hence $Q\cap T_i=Q'\cap T_i$. It follows that $Q=Q'$. Thus by Zorn's Lemma $X$ has a maximal element $T$. We claim that $T$ is a graded divided domain. If $Q$ is not a homogeneous divided prime ideal in $T$, then Lemma \ref{2.4} shows that $T+QT_{H'\setminus Q}$ is integral
and homogeneous unibranched over $T$, and hence over $R$, contradicting maximality in $X$, where $H'$ is the set of nonzero homogeneous elements of $T$. Therefore, $T$ is graded divided, as desired.

(2)$\Rightarrow$(3) It is trivial. (3)$\Rightarrow$(1) It follows from Lemmas \ref{2.1} and \ref{2.3}.
\end{proof}

Let $R =\bigoplus_{\alpha \in \Gamma}R_{\alpha}$ be a $\Gamma$-graded integral domain, and $I = \bigoplus_{\alpha \in \Gamma}I_{\alpha}$ be a homogeneous ideal of $R$. Then $R/I$ is a $\Gamma$-graded ring with $(R/I)_{\alpha}=R_{\alpha}/I_{\alpha}$ for each $\alpha \in \Gamma$ (by $( x_{\alpha_1} + \cdots +x_{\alpha_n}) +I = (x_{\alpha_1} +I_{\alpha_1}) + \cdots +  (x_{\alpha_n} + I_{\alpha_n})$ for $x_{\alpha_i} \in R_{\alpha_i}$). Assume that $R$ is a graded subring of $T =\bigoplus_{\alpha \in \Lambda}T_{\alpha}$, and $Q$ is a homogeneous prime ideal of $T$. Then $R/(Q\cap R)$ is a graded subring of $T/Q$, that is $r_{\alpha}+(Q\cap R)_{\alpha}\mapsto r_{\alpha}+Q_{\alpha}$ for $r_{\alpha}\in R_{\alpha}$ and $\alpha\in\Gamma$.

\begin{corollary}\label{R/P}
(see \cite[Remark 2.11]{d76}) Assume that $R=\bigoplus_{\alpha\in\Gamma}R_{\alpha}$ is a $\gGD$ domain and $P$ is a homogeneous prime ideal of $R$. Then $R/P$ is a $\gGD$ domain.
\end{corollary}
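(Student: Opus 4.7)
The plan is to reduce to the case where $R$ has a unique maximal homogeneous ideal via Proposition \ref{local}, and then exploit the graded divided overring characterization of Theorem \ref{2.5} by passing to quotients. Explicitly, the maximal homogeneous ideals of $R/P$ are exactly those of the form $M/P$ with $M$ a maximal homogeneous ideal of $R$ containing $P$; since $P\subseteq M$ forces $H\setminus M$ to be disjoint from $P$, the localization of $R/P$ at the image of $H\setminus M$ is naturally isomorphic to $R_{H\setminus M}/PR_{H\setminus M}$. By Remark \ref{l}, $R_{H\setminus M}$ is a $\gGD$ domain with unique maximal homogeneous ideal $MR_{H\setminus M}$, so Proposition \ref{local} reduces the problem to the case in which $R$ itself has a unique maximal homogeneous ideal.

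Under this assumption, Theorem \ref{2.5} furnishes a graded divided, integral, homogeneous unibranched overring $T$ of $R$; since $\Spec_h(T)\to\Spec_h(R)$ is a bijection, there is a unique homogeneous prime $Q$ of $T$ with $Q\cap R=P$. The plan is to show that $R/P\subseteq T/Q$ is an integral, homogeneous unibranched extension of graded domains and that $T/Q$ is graded divided; invoking the implication (3)$\Rightarrow$(1) of Theorem \ref{2.5} for $R/P$ (which inherits a unique maximal homogeneous ideal from $R$) will then finish the proof. Integrality is immediate, and homogeneous unibranchedness follows because $\Spec_h(T/Q)$ and $\Spec_h(R/P)$ parametrize the homogeneous primes of $T$ and $R$ containing $Q$ and $P$ respectively, to which the original unibranched bijection restricts.

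The heart of the argument is checking that $T/Q$ is graded divided. Given a homogeneous prime $\bar Q'$ of $T/Q$ (the image of some homogeneous prime $Q'\supseteq Q$ of $T$), a nonzero homogeneous $\bar x\in T/Q$ with $\bar x\notin \bar Q'$, and an arbitrary $\bar y\in\bar Q'$, I lift $\bar x$ to a homogeneous $x\in T\setminus Q'$ and $\bar y$ to some $y\in Q'$; the graded divided property of $T$ then yields $y=xt$ for some $t\in T$, which reduces modulo $Q$ to $\bar y=\bar x\bar t$. The only (minor) subtleties are this lifting step and the localization-by-quotient identification in the reduction; everything else is a direct assembly of Proposition \ref{local}, Remark \ref{l}, and Theorem \ref{2.5}.
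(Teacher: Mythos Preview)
Your proof is correct and follows essentially the same route as the paper's: reduce via Proposition~\ref{local} (and the identification $(R/P)_{\overline{H\setminus M}}\cong R_{H\setminus M}/PR_{H\setminus M}$) to the case of a unique maximal homogeneous ideal, then apply Theorem~\ref{2.5} to obtain a graded divided integral unibranched extension $T$, pass to $R/P\subseteq T/Q$, and conclude by (3)$\Rightarrow$(1) of Theorem~\ref{2.5}. The paper's proof is more terse, merely asserting that $T/Q$ is graded divided and that the extension is integral and homogeneous unibranched, whereas you supply the verifications explicitly.
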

\begin{proof} By Proposition \ref{local} we can assume that $R$ has a unique homogeneous maximal ideal. Then by Theorem \ref{2.5}, $R$ has an integral graded divided and homogeneous unibranched extension $T$. Let $Q$ be a homogeneous prime ideal of $T$ which contracts to $P$. The extension of graded domains $R/P\subseteq T/Q$ is integral and homogeneous unibranched and it can be seen easily that $T/Q$ is graded divided. Now $R/P$ is a $\gGD$ domain by Theorem \ref{2.5}.
\end{proof}

\vspace{.2cm}
\noindent {\bf Acknowledgement.}
The authors would like to thank the referee for his/her careful reading of the manuscript and several wonderful comments which greately improved the paper.

\end{document}